\newtheorem{theorem}{Theorem}[section]
\newtheorem{proposition}[theorem]{Proposition}
\newtheorem{lemma}[theorem]{Lemma}
\newtheorem{corollary}[theorem]{Corollary}
\newtheorem{definition}[theorem]{Definition}
\newtheorem{remark}[theorem]{Remark}
\newtheorem{example}[theorem]{Example}
\newcommand{\gauss}{ {}_2 { \rm F}_1 }
\newcommand{\appellett}{ { \rm F}_1 }
\newcommand{\m}{\operatorname{m}}
\newcommand{\Li}{\operatorname{Li}}
\newcommand{\K}{\operatorname{K}}
\newcommand{\Log}{\operatorname{Log}}
\newcommand{\C}{\mathbb{C}}
\newcommand{\Cont}{\mathcal{C}}
\newcommand{\Proj}{\mathbb{P}}
\newcommand{\R}{\mathbb{R}}
\newcommand{\Z}{\mathbb{Z}}
\newcommand{\Af}{\mathcal{A}_f}
\newcommand{\Length}{\operatorname{Length}}
\newcommand{\ddx}{\frac{\partial}{\partial x}}
\newcommand{\ddy}{\frac{\partial}{\partial y}}
\newcommand{\ddr}{\frac{d}{dr}}
\begin{document}
\title{An explicit calculation of the Ronkin function}
\author{Johannes Lundqvist}
\date{}
\address{Department of Mathematics\\
Stockholm University\\
SE-106 91 Stockholm\\ 
Sweden\\}
\email{johannes@math.su.se}
\maketitle

\begin{abstract}
We calculate the second order derivatives of the Ronkin function in the case of an affine linear polynomial in three variables and give an expression of them in terms of complete elliptic integrals and hypergeometric functions.
This gives a semi-explicit expression of the associated Monge-Amp\`ere measure, the Ronkin measure.
\end{abstract}

\section{Introduction}\label{intro}

Amoebas are certain projections of sets in $\C^n$ to $\R^n$ that are connected to several areas in mathematics such as complex analysis, tropical geometry, real algebraic geometry, special functions and combinatorics to name a few. Amoebas were first defined by Gelfand, Kapranov and Zelevinsky in \cite{GKZ} and these objects were later studied by several other authors like Mikhalkin, 
Passare, Rullg\aa rd, and Tsikh. The Ronkin function of a polynomial is closely connected to the amoeba of that polynomial. The main result in this paper is an explicit calculation of the second order derivatives of the Ronkin function in the case of an affine linear polynomial $f$ in three dimensions, thus giving an explicit expression of the so-called Ronkin measure associated to $f$.
\\
\\
Assume that $f$ is a Laurent polynomial in $n$ variables over $\C$. That is,
\begin{equation*}
f(z)=\sum_{\alpha\in A}a_{\alpha}z^{\alpha}
\end{equation*}
for some finite set $A\subset \Z^n$. The convex hull in $\R^n$ of the points $\alpha\in A$ for which $a_{\alpha}\neq 0$ is called the Newton polytope of $f$ and is denoted by $\Delta_f$.
\begin{definition}{\rm (Gelfand, Kapranov, Zelevinsky)}\label{amoeba}
Let $f(z)$ be a Laurent polynomial in $n$ variables over $\mathbb{C}$. The {\rm amoeba}, $\Af$, of $f$ is the image of $f^{-1}(0)$ under the map $\Log: (\C^*)^n\to\R^n$ defined by
\begin{equation*}
\Log(z_1,\ldots,z_n)=(\log|z_1|,\ldots,\log|z_n|).
\end{equation*}
The {\rm compactified amoeba}
of $f$, denoted by $\bar{\Af}$, is the closure of the image of $f^{-1}(0)$ under the map $\nu:(\C^*)^n\to\Delta_f$ defined by
\begin{equation*}
\nu(z_1,\ldots,z_n)=\frac{\sum_{\alpha\in A}|z^{\alpha}|\cdot\alpha}{\sum_{\alpha\in A}|z^{\alpha}|}.
\end{equation*}
\end{definition}
\noindent
We get the following commutative diagram
\begin{center}
$\xymatrix{
(\mathbb{C}^*)^n \ar[r]^{\Log} \ar[dr]^{\nu} &\mathbb{R}^n \ar[d]^{\gamma}\\
&\text{int}(\Delta_f),
}$
\end{center}
where 
\begin{equation*}
\gamma(x) = \frac{\sum_{\alpha \in A}e^{\left< \alpha,x\right> }\cdot \alpha}{\sum_{\alpha\in A}e^{\left<\alpha,x \right>}} 
\end{equation*}
is a diffeomorphism.
\smallskip

\noindent
The connected components of the complement of the amoeba of a Laurent polynomial are convex, see \cite{GKZ}. Moreover, the number of connected complement components is at least equal to the number of vertices in $\Delta_f\cap\mathbb{Z}^n$ and at most equal to the number of points in $\Delta_f\cap\mathbb{Z}^n$. That is, there exists an injective function from the set of connected components of $\R^n \setminus \Af$ to $\Delta\cap\mathbb{Z}^n$. 

Such an injective function can be constructed with the Ronkin function
\begin{equation*}
N_f(x)=\left( \frac{1}{2\pi i}\right) ^n\int_{Log^{-1}(x)}
\log|f(z)|\frac{dz}{z},\quad x\in\R^n.
\end{equation*}
It is a multivariate version of the mean value term in Jensen's formula and was first studied by Ronkin, see \cite{Ronkin}.
The Ronkin function of a product of two polynomials is obviously the sum of the Ronkin function of those two polynomials. It is also easy to see that the Ronkin function of a monomial $az^{\alpha}\in\C[z_1,\ldots ,z_n]$ is an affine linear polynomial in $\R[x_1,\ldots,x_n]$, i.e., if $f(z) = az_1^{\alpha_1}\ldots z_n^{\alpha_n}$ then
\begin{equation*}
N_f = \log|a| + \alpha_1x_1 + \alpha_2x_2 +\ldots + \alpha_nx_n.
\end{equation*}
The function $N_f$ is convex on $\R^n$ and it is affine linear on an open connected set $\Omega\subset\R^n$ if and only if $\Omega\subset\R^n\setminus\Af$.
In fact the gradient of $N_f$ at a point outside the amoeba is a point in $\Delta\cap\mathbb{Z}^n$, and thus the Ronkin function gives a mapping from the set of complement components to the set of points in $A$. This mapping was proved to be injective in \cite{FPT}. Moreover, it is easy to see that the amoeba always has components corresponding to the vertices in $\Delta\cap\mathbb{Z}^n$, and thus we get the inequalities on the number of complement components above.

\begin{example}
{\rm
Let 
\[
f(z)=a_0+a_1z+a_2z^2+\cdots + a_nz^n=(z-b_1)\cdots(z-b_n),
\]
where $a_0\neq 0$ and $b_1 \leq b_2<\ldots \leq b_n$. Then for $x$ such that $b_m<e^x<b_{m+1}$ we get
\begin{eqnarray*}
N_f(x) &=& \int_0^{2\pi}\log|f(e^{x+i\phi})|d\phi=\log|a_0| + \sum_{k=1}^m\log\left(\frac{e^x}{|b_k|}\right) = \\
&=& \log|a_0|-\sum_{k=1}^m\log|b_k| + mx
\end{eqnarray*}
by Jensen's formula, and we see that $N_f$ is a convex piecewise affine linear function, singular at $\log|b_k|, \quad k=1,\ldots, n$.
}
\end{example}

\begin{definition}
Let $f$ be a Laurent polynomial. The real Monge-Amp\`ere measure of $N_f$ is called the {\rm Ronkin measure} associated to $f$ and is denoted by $\mu_f$.
\end{definition}

\noindent
Since $N_f$ is affine linear outside the amoeba of $f$, the measure $\mu_f$ has its support on the amoeba.
Moreover, Passare and Rullg\aa rd proved that $\mu_f$ has finite total mass and that the total mass equals the volume of the Newton polytope of $f$, see \cite{PassRull}.
They also proved that 
\begin{equation}\label{olikhet}
\mu_f\geq\frac{\lambda}{\pi^2}
\end{equation}
on the amoeba of $f$, where $\lambda$ is the Lebesgue measure. From this estimate the following theorem follows immediately.
\begin{theorem}{\rm (Passare, Rullg\aa rd)}\label{olik}
In the two variable case the area of the amoeba of $f$ is bounded by $\pi^2$ times the area of the Newton polytope of $f$.
\end{theorem}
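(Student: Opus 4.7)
The proof should be essentially immediate from the two facts quoted just before the theorem, so my plan is mainly to package them correctly rather than to do any new work.

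The plan is to integrate the pointwise inequality $\mu_f \geq \lambda/\pi^2$ over the amoeba and use the total mass computation. First I would note that the inequality $\mu_f \geq \lambda/\pi^2$ stated in \eqref{olikhet} holds on $\Af$, which means that for every Borel set $E \subset \Af$ one has $\lambda(E) \leq \pi^2 \mu_f(E)$. Applying this with $E = \Af$ gives
\begin{equation*}
\Area(\Af) = \lambda(\Af) \leq \pi^2\, \mu_f(\Af).
\end{equation*}

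Next I would invoke the two remaining facts from Passare--Rullg\aa rd recorded in the text: the measure $\mu_f$ is supported on $\Af$, and its total mass equals the (Euclidean) volume of $\Delta_f$, which in dimension two is just $\Area(\Delta_f)$. These together yield
\begin{equation*}
\mu_f(\Af) = \mu_f(\R^2) = \Area(\Delta_f),
\end{equation*}
and substituting this into the previous inequality gives $\Area(\Af) \leq \pi^2\, \Area(\Delta_f)$, which is the claim.

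There is no real obstacle here; the only thing one might want to double-check is measurability of the amoeba (it is closed, being the image of the closed set $f^{-1}(0)$ under the proper map $\Log$ restricted appropriately, so this is automatic) and that the support statement for $\mu_f$ really implies that the total mass of $\mu_f$ is concentrated on $\Af$ — both of which are already built into what is cited before the theorem. So the entire proof is a one-line integration of \eqref{olikhet} combined with the total mass formula.
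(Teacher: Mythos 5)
Your proposal is correct and is exactly the argument the paper intends: the text states that the theorem ``follows immediately'' from the estimate \eqref{olikhet} together with the fact that $\mu_f$ is supported on $\Af$ with total mass equal to the volume of $\Delta_f$, which is precisely the integration you carry out. Nothing further is needed.
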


\noindent
In higher dimension, very little is known about the Ronkin measure.
There is no hope of finding an analogue of Theorem~\ref{olik} in more than two variables because in that situation the volume of the amoeba is almost always infinite, see Example~\ref{ex} below.
There might still be an inequality like the one in \eqref{olikhet} but with $1/\pi^2$ replaced by a function. In this paper we investigate the Ronkin measure of the affine linear polynomial $f=1+z+w+t$, which should be the easiest possible three variable example. There are some known explicit formulas for the closely connected Mahler measure, see Section~\ref{Mahler}, in this case but they only give information about the Ronkin measure on special curves on the amoeba.
In particular, there is no known explicit expression for the Ronkin function, which indicates that it might be considerably more complicated than in the two variable case. On the other hand, it seems that the Ronkin measure is easier to calculate than the Ronkin function itself.
The main result in this paper is that the Ronkin measure of an affine linear polynomial in three variables can be explicitly described in terms of complete elliptic integrals or hypergeometric functions.

%%%%%%%%%%%%%%%%%%%%%%%%%%%%%%%%%%%%%%%%%%%%%%%%%%%%%%%%%%%%%%%%%%%%%%%%%%%%%%%%%%%%%%%%%%%%%%%%%%%%%%%%%%%%%%%%%%%%%%%%%%%%%%%%%%%%%%%%%%%%%%%%%%%%%%%%%%%%%%%%%%%%%%%%%%%%
%%%%%%%%%%%%%%%%%%%%%%%%%%%%%%%%%%%%%%%%%%%%%%%%%%%%%%%%%%%%%%%%%%%%%%%%%%%%%%%%%%%%%%%%%%%%%%%%%%%%%%%%%%%%%%%%%%%%%%%%%%%%%%%%%%%%%%%%%%%%%%%%%%%%%%%%%%%%%%%%%%%%%%%%%%%%
%%%%%%%%%%%%%%%%%%%%%%%%%%%%%%%%%%%%%%%%%                                                          %%%%%%%%%%%%%%%%%%%%%%%%%%%%%%%%%%%%%%%%%%%%%%%%%%%%%%%%%%%%%%%%%%%%%%%%%
%%%%%%%%%%%%%%%%%%%%%%%%%%%%%%%%%%%%%%%%%           SECTION 2 HYPERPLANE AMOEBAS                   %%%%%%%%%%%%%%%%%%%%%%%%%%%%%%%%%%%%%%%%%%%%%%%%%%%%%%%%%%%%%%%%%%%%%%%%%
%%%%%%%%%%%%%%%%%%%%%%%%%%%%%%%%%%%%%%%%%                                                          %%%%%%%%%%%%%%%%%%%%%%%%%%%%%%%%%%%%%%%%%%%%%%%%%%%%%%%%%%%%%%%%%%%%%%%%%
%%%%%%%%%%%%%%%%%%%%%%%%%%%%%%%%%%%%%%%%%                                                          %%%%%%%%%%%%%%%%%%%%%%%%%%%%%%%%%%%%%%%%%%%%%%%%%%%%%%%%%%%%%%%%%%%%%%%%%
%%%%%%%%%%%%%%%%%%%%%%%%%%%%%%%%%%%%%%%%%%%%%%%%%%%%%%%%%%%%%%%%%%%%%%%%%%%%%%%%%%%%%%%%%%%%%%%%%%%%%%%%%%%%%%%%%%%%%%%%%%%%%%%%%%%%%%%%%%%%%%%%%%%%%%%%%%%%%%%%%%%%%%%%%%%%
%%%%%%%%%%%%%%%%%%%%%%%%%%%%%%%%%%%%%%%%%%%%%%%%%%%%%%%%%%%%%%%%%%%%%%%%%%%%%%%%%%%%%%%%%%%%%%%%%%%%%%%%%%%%%%%%%%%%%%%%%%%%%%%%%%%%%%%%%%%%%%%%%%%%%%%%%%%%%%%%%%%%%%%%%%%%

\section{Hyperplane amoebas}\label{hyper}

The Newton polytope of a hyperplane amoeba in $n$ variables has $n+1$ integer points and all of them are vertices. This implies that the amoeba has exactly $n+1$ complement components according to the results discussed directly below Definition \ref{amoeba}. Moreover, since there is only one subdivision of the Newton polytope, the trivial one, we know that the amoeba is solid, i.e., has no bounded 
complement components. The compactified hyperplane amoebas turn out to be particularly easy to express.
They are in fact polytopes.

\begin{proposition}{\rm (Forsberg, Passare, Tsikh)}\label{FPT}
Let $f$ be the affine linear polynomial $a_0+a_1z_1+a_2z_2+\ldots+a_nz_n$ and assume that $|a_j|+|a_k|\neq 0 $ for all $j$ and $k$. Then $\bar{\Af}$ is the convex hull of the points $v_{jk}=(t_1,\ldots,t_n)$, $j\neq k$, where either
\begin{eqnarray*}
&&t_j = \frac{|a_0|}{|a_j|+|a_0|}, \quad t_l = 0 \quad \text{for}\quad l\neq j\quad, \quad\text{or} \\
&&t_j = \frac{|a_k|}{|a_j|+|a_k|}, \quad t_k = \frac{|a_j|}{|a_k|+|a_j|}, \quad t_l=0 \quad \text{for } l\neq j,k.
\end{eqnarray*}
\end{proposition}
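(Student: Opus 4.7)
My approach is to identify $\bar{\Af}$ directly as a polytope cut out by explicit linear inequalities, and then extract its vertices by elementary combinatorics. For $z\in(\C^*)^n$ set $r_j=|z_j|$, so that $\nu(z)=(r_1,\dots,r_n)/(1+r_1+\cdots+r_n)$. The key input is the classical fact, proved by ordering the moduli and applying the triangle inequality, that $n+1$ complex numbers with prescribed moduli $\ell_0,\dots,\ell_n$ can be chosen with arbitrary arguments so as to sum to zero if and only if $\ell_i\le\sum_{k\ne i}\ell_k$ for every $i$. Applied to the relation $a_0+a_1z_1+\cdots+a_nz_n=0$ this characterises the set of moduli $(r_1,\dots,r_n)$ realised by some $z\in f^{-1}(0)$ as exactly those $(r_1,\dots,r_n)\in\R^n_{\ge 0}$ satisfying $|a_i|r_i\le\sum_{k\ne i}|a_k|r_k$ for all $i=0,1,\dots,n$, where $r_0:=1$.

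Substituting $t_j=r_j/(1+\sum r_k)$ for $j\ge 1$ and $t_0=1/(1+\sum r_k)$ homogenises these inequalities to the symmetric system
$$|a_i|t_i\le\sum_{k\ne i}|a_k|t_k,\qquad i=0,1,\dots,n,$$
together with $t_i\ge 0$ and $\sum_{i=0}^n t_i=1$. Taking closures, this identifies $\bar{\Af}$ with the bounded convex polytope $P$ cut out by these constraints inside the standard $n$-simplex.

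It remains to read off the vertices of $P$. Let $I\subseteq\{0,1,\dots,n\}$ denote the support of the coordinates at a vertex $t$; since such a vertex is pinned down by $n$ independent active linear equalities, at least $|I|-1$ of the polygon equalities $|a_i|t_i=\sum_{k\ne i}|a_k|t_k$ must hold for indices $i\in I$. Summing these equalities over a subset $J\subseteq I$ of size $|I|$ or $|I|-1$ and writing $S_I=\sum_{i\in I}|a_i|t_i$ gives $2\sum_{j\in J}|a_j|t_j=|J|\,S_I$; together with the hypothesis $|a_j|+|a_k|\ne 0$ (which allows at most one $a_l$ to vanish), a short case distinction forces $|I|\le 2$. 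When $I=\{i,k\}$, the single polygon equality reduces to $|a_i|t_i=|a_k|t_k$, and combined with $t_i+t_k=1$ this yields $t_i=|a_k|/(|a_i|+|a_k|)$ and $t_k=|a_i|/(|a_i|+|a_k|)$, precisely the formula for $v_{jk}$ (the two cases in the proposition correspond to whether $0\in I$). Since a bounded convex polytope is the convex hull of its vertices, this yields $\bar{\Af}=\mathrm{conv}\{v_{jk}\}$. The conceptual crux is the polygon-inequality characterisation of $\nu(f^{-1}(0))$; the remainder is linear-algebraic bookkeeping.
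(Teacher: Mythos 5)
The paper does not actually prove Proposition~\ref{FPT}: it is imported verbatim from Forsberg--Passare--Tsikh \cite{FPT}, so there is no internal proof to compare against. Your argument is essentially the original one from that paper: the polygon-inequality characterisation of which modulus vectors $(|a_0|,|a_1|r_1,\dots,|a_n|r_n)$ occur for zeros of $f$, followed by the observation that in the barycentric coordinates $t_0,\dots,t_n$ these inequalities become the homogeneous system $|a_i|t_i\le\sum_{k\ne i}|a_k|t_k$ cutting a polytope out of the standard simplex, whose vertices are then enumerated. The plan is correct, and the reduction of the vertex computation for $|I|=2$ to $|a_i|t_i=|a_k|t_k$, $t_i+t_k=1$ is exactly right. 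Two spots in the sketch deserve one more line each. First, the closure step $\bar{\Af}=\overline{P\cap\{t_i>0\,\forall i\}}=P$ uses that $P$ meets the open orthant; this is easy (take $t_i$ proportional to $1/|a_i|$, adjusting when one coefficient vanishes) but should be said, since otherwise a facet of $P$ lying in a coordinate hyperplane could a priori be lost. Second, your summation identity $2\sum_{j\in J}|a_j|t_j=|J|S_I$ kills $|I|\ge 4$ and the case $J=I$, but for $|I|=3$ with exactly two active polygon equalities it only yields $|a_m|t_m=0$ for the omitted index $m$, not a contradiction; you must then observe that with $a_m=0$ the two active functionals become linearly dependent modulo the constraints $t_l=0$, $l\notin I$, so they cannot supply the two independent conditions a vertex requires. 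With those two remarks filled in, the proof is complete and agrees with the source the paper cites.
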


\begin{figure}[h]
\begin{center}
\includegraphics[height=3cm]{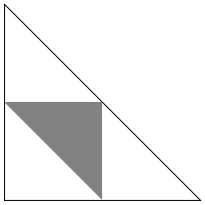}
\qquad
\includegraphics[height=3cm]{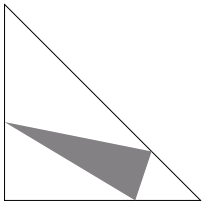}
\caption{The compactified amoebas of $f(z,w)=1+z+w$ and \newline $f(z,w)=2+z+3w$}
\label{triangel}
\end{center}
\end{figure}

In Section \ref{intro} we saw that the area of an amoeba in two variables is finite. That is not true in higher dimension as we see in the example below. 

\begin{example}\label{ex}
{\rm
Let $f=1+z+w+t$.
It follows from \cite[Theorem~1]{Pass} that the 
corner set of $\operatorname{max}(0,x,y,u)$ is included in the amoeba. This tropical hypersurface is called the spine of the amoeba.
In particular, the amoeba contain the ray $(0,0,t)$ for $t\in[-\infty,0]$. Actually a whole cylinder containing that ray is contained in the amoeba. This can be seen in the following way.
Consider the annulus 
\begin{equation*}
\mathcal{U} = \{ 1+r_1e^{i\varphi}+r_2e^{i\theta}; \varphi,\theta\in[0,2\pi], \frac{2}{3}\leq r_1,r_2\leq\frac{4}{3} \}.
\end{equation*}
If $C$ is a circle with center at the origin and with radius $r\leq 1$ then it is obvious that 
$C\cap\mathcal{U}\neq \emptyset$.
This means that a point $(x,y,u\in\R^3)$ lies in the amoeba of $f$ if $x,y\in[\log|2/3|,\log|4/3|]$ and $u\in(-\infty,0]$ thus the amoeba of $f$ contains a set that obviously has infinite volume.
}
\end{example}

The affine linear polynomials in two variables define so-called Harnack curves and the Ronkin measures associated to such polynomials are known to have constant density 
$1/\pi^2$ on the amoeba, \cite{MikRull}. In this case we get such an easy expression of the partial derivatives of $N_f$  that the fact that $\mu_f$ has constant density $1/\pi^2$ is easy to verify directly. In the case of three variables this kind of calculation is harder and will be done in Section \ref{Ronkin}.
By a change of variables we get the lemma below. It will simplify some of the calculations because it reduces the problem to the case where all the coefficients are equal to $1$.
\begin{lemma}\label{lattlemma}
If
\begin{align*}
f(z) &= 1+z_1+\ldots + z_n\quad \text{and}\quad
f_a(z) = 1+a_1z_1+\ldots +a_nz_n,
\end{align*}
then
\begin{equation*}
N_{f_a}(x_1,\ldots, x_n) = N_{f}(x_1+\log|a_1|,\ldots, x_n + \log|a_n|).
\end{equation*}
\end{lemma}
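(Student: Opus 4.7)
The plan is to unwind the definition of the Ronkin function as an integral over the real torus $\Log^{-1}(x)$ and show that the modifications encoded by the $a_j$'s can be absorbed by (i) translating the angular integration variables, which does not change the integral by periodicity, and (ii) shifting the logarithmic variables $x_j$.

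More concretely, I would start from the parametrization $z_j = e^{x_j + i \theta_j}$, $\theta_j \in [0, 2\pi]$, which rewrites
\begin{equation*}
N_{f_a}(x_1, \dots, x_n) = \frac{1}{(2\pi)^n} \int_{[0,2\pi]^n} \log\bigl| 1 + a_1 e^{x_1 + i\theta_1} + \cdots + a_n e^{x_n + i\theta_n} \bigr|\, d\theta_1 \cdots d\theta_n.
\end{equation*}
Writing each coefficient in polar form $a_j = |a_j| e^{i\alpha_j}$, the $j$th exponential becomes $|a_j| e^{x_j + i(\theta_j + \alpha_j)}$. I would then perform the change of variables $\theta_j \mapsto \theta_j - \alpha_j$ in each coordinate separately; since the integrand is $2\pi$-periodic in each $\theta_j$, the domain of integration is unchanged and the value of the integral is preserved.

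After this translation, the integrand is exactly
\begin{equation*}
\log\bigl| 1 + e^{(x_1 + \log|a_1|) + i\theta_1} + \cdots + e^{(x_n + \log|a_n|) + i\theta_n} \bigr|,
\end{equation*}
since $|a_j| e^{x_j} = e^{x_j + \log|a_j|}$. This is precisely the integrand defining $N_f$ evaluated at the point $(x_1 + \log|a_1|, \dots, x_n + \log|a_n|)$, which yields the claim.

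There is no real obstacle here: the proof is a direct computation that hinges only on the periodicity of the torus integration and on the identity $|a|\,e^{x} = e^{x + \log|a|}$. The only point one should be slightly careful about is verifying that the translation in each $\theta_j$ really leaves the integral invariant — but this is immediate from periodicity, and no assumption on the $a_j$ beyond $a_j \neq 0$ (implicit in taking $\log |a_j|$) is required.
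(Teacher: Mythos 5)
Your proof is correct and is exactly the change of variables the paper has in mind (the paper only remarks that the lemma follows ``by a change of variables'' and gives no further detail). The rotation $\theta_j \mapsto \theta_j - \alpha_j$ on the torus together with the identity $|a_j|e^{x_j} = e^{x_j+\log|a_j|}$ is the whole argument, and your caveat that $a_j \neq 0$ is needed is the right one.
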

\noindent

An important subset of the amoeba is the so-called contour and it play an important role in the results in this paper.

\begin{definition}
The set of critical values of the mapping $\Log$ restricted to $f^{-1}(0)$ is called the {\rm contour} of $\Af$ and is denoted by $\mathcal{C}$. 
\end{definition}
\noindent
The contour is a real analytic hypersurface of $\R^n$ and the boundary of the amoeba is always included in the contour. 
The following theorem gives a nice description of the contour.

\begin{proposition}{\rm (Mikhalkin)}\label{Mik}
Let $f$ be a Laurent polynomial. The critical points of the map $\Log$ are exactly the ones that are mapped to 
the real subspace $\R\mathbb{P}^{n-1}\subset\C\mathbb{P}^{n-1}$ under the logarithmic Gauss map. That is,
\begin{displaymath}
\Cont = \Log(\gamma^{-1}(\R\Proj^{n-1})).
\end{displaymath}
\end{proposition}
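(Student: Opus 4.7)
The plan is to translate the statement into linear algebra at a smooth point $z \in V := f^{-1}(0)$ by computing the differential of $\Log|_V$. First I would note that since $\log|z_j|$ is the real part of any local branch of $\log z_j$, the differential acts on a complex tangent vector $\xi = (\xi_1,\ldots,\xi_n)$ by
\[
d\Log_z(\xi) = \bigl(\mathrm{Re}(\xi_1/z_1),\ldots,\mathrm{Re}(\xi_n/z_n)\bigr).
\]
The tangent space $T_zV$ is the complex hyperplane $\{\xi : \sum_j (\partial f/\partial z_j)(z)\,\xi_j = 0\}$, which under the $\C$-linear change of variables $w_j := \xi_j/z_j$ becomes
\[
H_z = \Bigl\{w \in \C^n : \sum_{j=1}^n \eta_j w_j = 0\Bigr\},\qquad \eta_j := z_j\,\partial f/\partial z_j(z).
\]
Thus $z$ is a critical point of $\Log|_V$ if and only if the real-part projection $\mathrm{Re}: H_z \to \R^n$, $(u+iv)\mapsto u$, fails to be surjective.

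Next I would carry out the main linear-algebra step. Writing $\eta_j = \alpha_j + i\beta_j$ and $w_j = u_j + iv_j$, the equation $\sum \eta_j w_j = 0$ splits into the two real equations
\[
\alpha\cdot u = \beta\cdot v,\qquad \beta\cdot u = -\alpha\cdot v.
\]
For a prescribed $u \in \R^n$, the existence of a compatible $v$ is a $2\times n$ linear system in $v$ whose coefficient rows are $\alpha$ and $\beta$. It is solvable for every $u$ precisely when $\alpha$ and $\beta$ are $\R$-linearly independent; otherwise the image of $\mathrm{Re}|_{H_z}$ is a proper hyperplane in $\R^n$. Hence $z$ is critical iff $\alpha,\beta$ are $\R$-linearly dependent, equivalently iff $\eta = \alpha + i\beta$ is a complex scalar multiple of a real vector, i.e.\ $[\eta] \in \R\Proj^{n-1} \subset \C\Proj^{n-1}$.

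Finally, the projective point $[\eta_1:\cdots:\eta_n] = [z_1\,\partial f/\partial z_1 : \cdots : z_n\,\partial f/\partial z_n]$ is by construction the image of $z$ under the logarithmic Gauss map $\gamma$, so the critical locus of $\Log|_V$ coincides with $\gamma^{-1}(\R\Proj^{n-1})$, and taking $\Log$-images of critical points gives $\Cont = \Log(\gamma^{-1}(\R\Proj^{n-1}))$. The essential (and really the only) obstacle is the linear-algebra observation in the middle paragraph: recognizing that the failure of $\mathrm{Re}|_{H_z}$ to be surjective is detected exactly by a real projective condition on the defining covector $\eta$. Singular points of $V$, where $\eta = 0$, can be handled separately, since they automatically belong to the critical set and their $\Log$-images lie in the contour by continuity.
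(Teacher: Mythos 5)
Your argument is correct and complete. Note that the paper does not actually prove Proposition~\ref{Mik}; it only refers the reader to Mikhalkin's paper, so there is no internal proof to compare against. What you give is the standard argument: identify $T_zV$ with the complex hyperplane $\sum_j \eta_j w_j=0$ for $\eta_j=z_j\,\partial f/\partial z_j$ via the logarithmic coordinates $w_j=\xi_j/z_j$, observe that $d(\Log|_V)_z$ is the real-part projection on that hyperplane, and reduce criticality to the rank of the $2\times n$ system with rows $\alpha=\mathrm{Re}\,\eta$ and $\beta=\mathrm{Im}\,\eta$, which drops exactly when $[\eta]=\gamma(z)$ lies in $\R\Proj^{n-1}$. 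The linear algebra checks out in both directions (when $\alpha,\beta$ are dependent the image of the real-part projection is contained in the hyperplane orthogonal to their common direction), and your closing remark on singular points is harmless since $\gamma$ is only defined where $\eta\neq 0$, which is automatic on the smooth locus of $f^{-1}(0)\cap(\C^*)^n$.
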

where $\gamma:f^{-1}(0)\to\C\Proj^{n-1}$ is 
\begin{equation*}
\gamma(z_1,\ldots z_n)=[z_1\frac{\partial}{\partial z_1} f(z_1,\ldots, z_n):\ldots:z_n\frac{\partial}{\partial z_n} f(z_1,\ldots, z_n)].
\end{equation*}
A proof can be found in \cite{Mikhalkin}.

The contour of the hyperplane amoeba in three variables is easy to describe and it subdivides the amoeba into eight parts. Proposition \ref{Mik}
gives that the contour for the amoeba of $f=1+z+w+t$ is given by the set of points $(x,y,u)\in\R^3$ that satisfy one of the equalities
\begin{align*}
&1 + e^x = e^y + e^u,\quad 1 + e^y = e^x + e^u, \quad 1 + e^u = e^x + e^y\\
&e^x = 1 + e^y + e^u,\quad e^y = 1 + e^x + e^y,\quad e^u = 1 + e^x + e^y\\
&1 = e^x + e^y + e^u.
\end{align*}
Note that the equalities with only one term on the left hand side %last four equalities 
are parts of the boundary of $\Af$. Indeed, if  for example 
$
1 > e^x + e^y + e^u,
$
then there cannot exist any angles $\varphi_1,\varphi_2,\varphi_3$ such that 
\[
1 + e^{x+i\varphi_1} + e^{y+i\varphi_2} + e^{u+i\varphi_3}=0,
\]
and hence $(x,y,u)\notin\Af$.

\begin{corollary}
Let $f(z,w,t)=1+z+w+t$. The compactified amoeba of $f$ is an octahedron and the contour divides it into eight convex chambers. The part of the contour that is not on the boundary is the union of the three squares naturally defined by the octahedron. See figure \ref{Contour3var}.
\end{corollary}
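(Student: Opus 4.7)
The plan is to read off the vertices of $\bar{\Af}$ from Proposition \ref{FPT}, push the seven contour equations forward through the diffeomorphism $\gamma$ (equivalently, through the compactification map $\nu$), and then identify the resulting pieces with the geometric features of the octahedron.

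With $a_0=a_1=a_2=a_3=1$, Proposition \ref{FPT} produces six vertices: the three midpoints $\tfrac12 e_j$ of the edges of $\Delta_f$ from the origin, and the three midpoints $\tfrac12(e_j+e_k)$ of the remaining edges. The three antipodal pairs $\{\tfrac12 e_j,\tfrac12(e_k+e_l)\}$ with $\{j,k,l\}=\{1,2,3\}$ all have common midpoint $(\tfrac14,\tfrac14,\tfrac14)$, so these six points are the vertices of an (affine) octahedron with three long diagonals meeting at that center, eight triangular faces, and three equatorial ``squares'' lying in the hyperplanes $t_j+t_k=\tfrac12$.

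Writing $r_j=e^{x_j}$, the map $\gamma$ becomes $(r_1,r_2,r_3)/(1+r_1+r_2+r_3)$. For a boundary contour equation such as $r_1=1+r_2+r_3$, the denominator simplifies to $2r_1$, forcing $\gamma_1=\tfrac12$; as $(r_2,r_3)$ varies over $(0,\infty)^2$ the image fills the triangular face of $\bar{\Af}$ with vertices $\tfrac12 e_1,\ \tfrac12(e_1+e_2),\ \tfrac12(e_1+e_3)$. The four boundary equations therefore correspond bijectively to the four triangular faces of $\bar{\Af}$ that lie interior to $\Delta_f$ (the other four faces of the octahedron sit on $\partial\Delta_f$ and come from ``infinity''). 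For an interior equation such as $1+r_1=r_2+r_3$, the denominator simplifies to $2(r_2+r_3)$, giving $\gamma_2+\gamma_3=\tfrac12$ while $\gamma_1=\tfrac12-1/(2(r_2+r_3))$ sweeps $[0,\tfrac12)$; upon taking closure the image is the full equatorial square opposite the antipodal pair $\{\tfrac12 e_1,\tfrac12(e_2+e_3)\}$. The two remaining interior equations yield the other two equatorial squares by the obvious symmetry.

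Finally, the three hyperplanes $t_j+t_k=\tfrac12$ all pass through $(\tfrac14,\tfrac14,\tfrac14)$ and are in general position; pairwise they intersect along one of the three long diagonals of the octahedron, and all three meet only at the center. Hence they slice $\bar{\Af}$ into $2^3=8$ convex chambers indexed by the signs of $t_j+t_k-\tfrac12$, each sharing a single triangular face with the boundary of $\bar{\Af}$. The only nontrivial step is the $\gamma$-image calculation, and the principal subtlety there is verifying that each interior equation really surjects onto the entire equatorial square rather than merely a subset; once that is established, the combinatorial conclusion about chambers is immediate.
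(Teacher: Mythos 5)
Your proposal is correct and follows essentially the same route as the paper: the octahedron is read off from Proposition \ref{FPT}, and each contour equation is pushed through $\gamma$, where the key observation is that an interior equation such as $1+e^x=e^y+e^u$ forces the sum of two image coordinates to equal $\tfrac12$, yielding an equatorial square. You supply somewhat more detail than the paper does (surjectivity onto the squares, the identification of the boundary equations with the interior triangular faces, and the final count of $2^3$ convex chambers), but the underlying argument is the same.
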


\begin{proof}
The first part of the corollary is proved by applying Proposition~\ref{FPT}. 
Consider the points on the contour that satisfy 
$1+e^x=e^y+e^y$. These points are mapped to the compactified amoeba by the map $\gamma$ in Definition \ref{amoeba} to points 
\begin{displaymath}
\frac{(t,s,1+t-s)}{2(1+t)}.
\end{displaymath}
Now, since the sum of the second and third coordinate is equal to $1/2$ we get that the image is equal to the square with vertices in the points 
$(0,0,1/2)$, $(0,1/2,0)$, $(1/2,1/2,0), (1/2,0,1/2)$. The other parts of the contour are handeled analogously.
\end{proof}

\begin{figure}[h]
\begin{center}
\includegraphics[height=4cm]{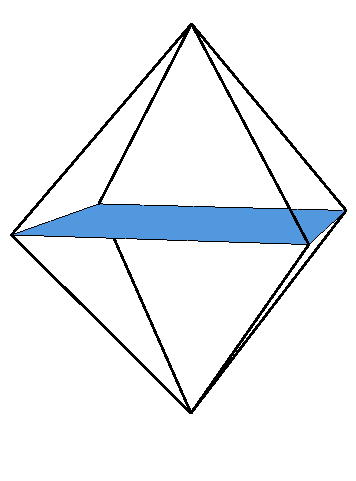}
\includegraphics[height=4cm]{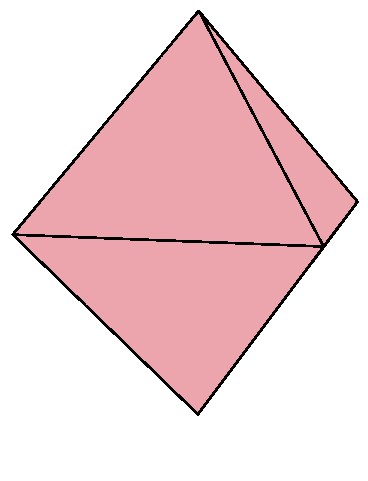}
\includegraphics[height=4cm]{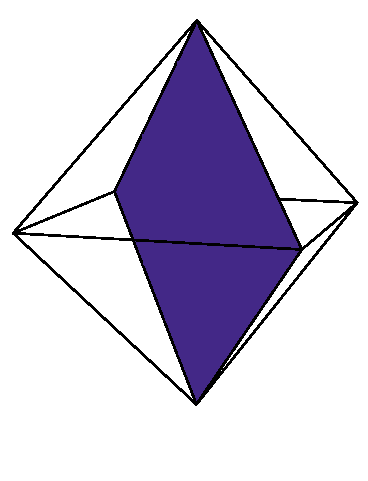}
\caption{The contour minus the boundary of $\bar{\Af}$ when \newline 
$f=1+z+w+t$ is the union of three squares.}
\label{Contour3var}
\end{center}
\end{figure}

\noindent
Let $(x,y,u)$ be a point in the compactified amoeba that is not on the contour. Then $(x,y,u)$ satisfies three inequalities, for example 

\begin{equation}\label{exkammare}
1 + e^x > e^y + e^u, \quad1 + e^y > e^x + e^u, \quad1 + e^u > e^x + e^y.
\end{equation}
If the inequality goes in the direction $>$ we associate a $+$ to it and if it goes in the other direction we associate a $-$ to it. In this way we get a triple with minus or plus signs for every point in the amoeba and thus a numbering of the eight chambers. 
For example, a point $x$ satisfy (\ref{exkammare}) if and only if $x$ belongs to the chamber $(+,+,+)$.

%%%%%%%%%%%%%%%%%%%%%%%%%%%%%%%%%%%%%%%%%%%%%%%%%%%%%%%%%%%%%%%%%%%%%%%%%%%%%%%%%%%%%%%%%%%%%%%%%%%%%%%%%%%%%%%%%%%%%%%%%%%%%%%%%%%%%%%%%%%%%
%%%%%%%%%%%%%%%%%%%%%%%%%%%%%%%%%%%%%%%%%%%%%%%%%%%%%%%%%%%%%%%%%%%%%%%%%%%%%%%%%%%%%%%%%%%%%%%%%%%%%%%%%%%%%%%%%%%%%%%%%%%%%%%%%%%%%%%%%%%%%%%%%%%%%%%%%%%%%%%%%%%
%%%%%%%%%%%%%%%%%%%%%%%%%%%%%%%%%%%%%%%%%%%%%%%%%%%%%%%%
%%%%%%%%%%%%%%%%%%%%%%%%%%%%%%%%%%%%%%%%%%%%%%%%%%%%%%%%          SECTION 4 THE RONKIN MEASURE IN THE CASE OF A HYPERPLANE IN THREE DIMENSIONS
%%%%%%%%%%%%%%%%%%%%%%%%%%%%%%%%%%%%%%%%%%%%%%%%%%%%%%%%
%%%%%%%%%%%%%%%%%%%%%%%%%%%%%%%%%%%%%%%%%%%%%%%%%%%%%%%%%%%%%%%%%%%%%%%%%%%%%%%%%%%%%%%%%%%%%%%%%%%%%%%%%%%%%%%%%%%%%%%%%%%%%%%%%%%%%%%%%%%%%%%%%%%%%%%%%%%%%%%%%%%
%%%%%%%%%%%%%%%%%%%%%%%%%%%%%%%%%%%%%%%%%%%%%%%%%%%%%%%%%%%%%%%%%%%%%%%%%%%%%%%%%%%%%%%%%%%%%%%%%%%%%%%%%%%%%%%%%%%%%%%%%%%%%%%%%%%%%%%%%%%%%

\section{The Ronkin measure in the case of a hyperplane in three variables}\label{Ronkin}

The Ronkin measure for polynomials in two variables is rather well understood. In particular the measure of an affine linear polynomial in two variables is identically equal to $1/\pi^2$ times the Lebesgue measure on the amoeba. Not much is known in the case of three variable polynomials.
A first step is to look at the case where $f$ is a linear polynomial, i.e.,
$f=a+bz+cw+dt$, where $a,b,c$ and $d$ are complex numbers. 
Now, because of Lemma \ref{lattlemma} we only need to consider the case where $a,b,c$ and $d$ all equal 1.

%%%%%%%%%%%%%%%%%%%%%%%%%%%%%%%%%%%%%%%%%%%%%%%%%%%%%%%%%%%%%%%%%%%%%%%%%%%%%%%%%%%%%%%%%%%%%%%%%%%%%%%%%%%%%%%%%%%%%%%%%%%%%%%%%%%%%%%%%%%%%%%%%%%%%%%%%%%%%%%%%%%
%%%%%%%%%%%%%%%%%%%%%%%%%%%%%%%%%%%%%%%%%%%%%%%%%%%%%%%%%%%%%%%%%%%%%%%%%%%%%%%%%%%%%%%%%%%%%%%%%%%%%%%%%%%%%%%%%%%%%%%%%%%%%%%%%%%%%%%%%%%%%%%%%%%%%%%%%%%%%%%%%%%
%%%%%%%%%%%%%%%%%%%%%%%%%%%%%%%%%%%%%%%%%%%%%%%%%%%%%%%%
%%%%%%%%%%%%%%%%%%%%%%%%%%%%%%%%%%%%%%%%%%%%%%%%%%%%%%%%          SECTION 4.1 THE DERIVATIVES
%%%%%%%%%%%%%%%%%%%%%%%%%%%%%%%%%%%%%%%%%%%%%%%%%%%%%%%%
%%%%%%%%%%%%%%%%%%%%%%%%%%%%%%%%%%%%%%%%%%%%%%%%%%%%%%%%%%%%%%%%%%%%%%%%%%%%%%%%%%%%%%%%%%%%%%%%%%%%%%%%%%%%%%%%%%%%%%%%%%%%%%%%%%%%%%%%%%%%%%%%%%%%%%%%%%%%%%%%%%%
%%%%%%%%%%%%%%%%%%%%%%%%%%%%%%%%%%%%%%%%%%%%%%%%%%%%%%%%%%%%%%%%%%%%%%%%%%%%%%%%%%%%%%%%%%%%%%%%%%%%%%%%%%%%%%%%%%%%%%%%%%%%%%%%%%%%%%%%%%%%%%%%%%%%%%%%%%%%%%%%%%%

\subsection{The derivatives}
Let $f=1+z+w+t$.
We see that 
\begin{eqnarray*}\label{deriv}
\frac{\partial N_f(x,y,u)}{\partial x} &=& \ddx\left( \frac{1}{2\pi i} \right)^3 \int_{\Log^{-1}(x,y,u)}\log|1+z+w+t|\frac{dzdwdt}{wt}\\
&=&\left( \frac{1}{2\pi i} \right)^3 \int_{\Log^{-1}(x,y,u)} \frac{1}{1+z+w+t}\frac{dzdwdt}{wt}\\
&=&\left( \frac{1}{2\pi i} \right)^2\left(\frac{1}{2\pi i}\right) \int_{\Log^{-1}(x,y,u)} \frac{dz}{z-(-1-w-t)}\frac{dwdt}{wt}.
\end{eqnarray*}

\noindent
Since
the inner integral is equal to $1$ when $|1+e^{y+i\varphi}+e^{u+i\theta}|<e^x$ and is $0$ otherwise we see that
$(\partial/\partial x) N_f$ is equal to the area of the set 
\begin{equation*}
T = \{ (\varphi,\theta)\in\mathbb{T}^2;|1+e^{y+i\varphi}+e^{u+i\theta}|<e^x \},
\end{equation*}
divided by $(2\pi)^2$.
\noindent
Note that $T$ is equal to the area enclosed by the curve that is the projection of the fiber over the point $(x,y,u)$ onto the $\varphi\theta$ plane.

\begin{proposition}\label{derivata}
Outside the contour we have
\begin{align}
&\pi^2\frac{\partial N_f(x,y,u)}{\partial x} =\nonumber \\ 
&= -\int^{r_1}_{r_0}\arccos\left(\frac{1+r^2-e^{2x}}{2r}\right)\frac{d}{dr}\arccos\left(\frac{r^2-e^{2y}-e^{2u}}{2e^{y+u}}\right) dr \label{eq1},
\end{align}
where $r_0$ and $r_1$ depend on which chamber the point $(x,y,u)$ belongs to according to the following table:
\begin{center}
\begin{tabular}{|c|c|c|}
\hline
Chamber &  $r_0$ &  $r_1$ \\
\hline
$(+,+,+)$ & $1-e^x$ & $e^y+e^u$ \\
\hline
$(-,+,+)$ & $1-e^x$ & $1+e^x$ \\
\hline
$(-,-,+)$ & $e^u-e^y$ & $1+e^x$ \\
\hline
$(+,-,+)$ & $e^u-e^y$ & $e^y+e^u$ \\
\hline
$(+,-,-)$ & $e^x-1$ & $e^y+e^u$ \\
\hline
$(+,+,-)$ & $e^y-e^u$ & $e^y+e^u$ \\
\hline
$(-,+,-)$ & $e^y-e^u$ & $1+e^x$ \\
\hline
$(-,-,-)$ & $e^x-1$ & $1+e^x$ \\
\hline
\end{tabular}
\end{center}
The chambers are defined at the end of Section \ref{hyper}.
\end{proposition}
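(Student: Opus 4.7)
The task is already reduced to computing $(2\pi)^{-2}\Area(T)$, where $T = \{(\varphi,\theta)\in\mathbb{T}^2 : |1+e^{y+i\varphi}+e^{u+i\theta}|<e^x\}$. The natural variable appearing in both arccosines in the statement is the modulus $r=|e^{y+i\varphi}+e^{u+i\theta}|$, so I would change coordinates on the torus from $(\varphi,\theta)$ to $(\varphi,\psi)$ with $\psi=\theta-\varphi$; the Jacobian is $1$, and the law of cosines gives $r^2 = e^{2y}+e^{2u}+2e^{y+u}\cos\psi$, depending only on $\psi$. Writing $s = e^{y+i\varphi}+e^{u+i\theta}$, one has $\arg s = \varphi+\beta(\psi)$ for an explicit function $\beta$, so a second application of the law of cosines turns $|1+s|^2<e^{2x}$ into the arc condition $\cos(\arg s)<(e^{2x}-1-r^2)/(2r)$ on the shifted angle.

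Integrating the indicator over $\varphi$ first, for each fixed $\psi$ the admissible $\varphi$-set has arc length equal to $2\arccos((1+r^2-e^{2x})/(2r))$ whenever the argument lies in $[-1,1]$; using the evenness $r(-\psi)=r(\psi)$ one obtains
\[
\Area(T) \;=\; 4\int_0^{\pi} \arccos\!\left(\frac{1+r(\psi)^2-e^{2x}}{2\,r(\psi)}\right) d\psi.
\]
Changing variables from $\psi$ to $r$ through the monotone inverse $\psi(r)=\arccos((r^2-e^{2y}-e^{2u})/(2e^{y+u}))$, which decreases from $\pi$ to $0$ as $r$ traverses $[|e^y-e^u|,\,e^y+e^u]$, rewrites the area as minus the integral in the statement, modulo the factor $\pi^2/(2\pi)^2 = 1/4$ that relates $\pi^2 \partial N_f/\partial x$ to $\Area(T)$.

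Finally, the chamber-dependent limits $r_0, r_1$ arise from restricting the $r$-range $[|e^y-e^u|,\,e^y+e^u]$ to the subinterval $[|1-e^x|,\,1+e^x]$ on which the first arccosine is honestly real, equivalently on which the arc over $\varphi$ is neither empty nor the full circle. The table is read off by a case analysis of which of the four candidate endpoints $|e^y-e^u|$, $|1-e^x|$, $1+e^x$, $e^y+e^u$ is innermost from below and which is innermost from above; this ordering is governed precisely by the sign pattern of the three inequalities $1+e^x\gtrless e^y+e^u$, $1+e^y\gtrless e^x+e^u$, $1+e^u\gtrless e^x+e^y$ defining the chamber, and the same sign pattern also fixes whether $|e^y-e^u|$ equals $e^y-e^u$ or $e^u-e^y$ as it appears in the table. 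The main technical obstacle is organising this eight-way case analysis cleanly and verifying that in each chamber the portion of $[|e^y-e^u|,\,e^y+e^u]$ lying outside $[r_0,r_1]$ genuinely contributes nothing to the integral, i.e.\ that the extended-constant values $0$ and $\pi$ of the first arccosine on those side-intervals do not add a boundary term to the tabulated formula.
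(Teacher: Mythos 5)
Your slicing of the torus by $\psi=\theta-\varphi$ is exactly the paper's decomposition into the lines $L_\gamma$, and the rest of your argument (law of cosines giving $r$, the arc length $2\arccos((1+r^2-e^{2x})/(2r))$ of the admissible $\varphi$-set, the monotone change of variables $\psi\mapsto r$, the factor $1/4$) reproduces the paper's proof step for step; the paper phrases the slice computation via the constancy of the Jacobian of $\operatorname{Arm}_{y,u}$ along $L_\gamma$, but the geometry is the same.

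The one step you explicitly defer --- checking that the portions of $[|e^y-e^u|,\,e^y+e^u]$ lying outside $[r_0,r_1]$ contribute nothing --- is, however, not a formality, and it does not go through as stated. On a side-interval where the first arccosine saturates at $0$ the contribution indeed vanishes, but on a side-interval where it saturates at $\pi$ (i.e.\ $r<e^x-1$, so the circle $\partial D(1,r)$ lies entirely inside $D(0,e^x)$) the integrand is $\pi\,d\psi/dr$, which is not identically zero. This occurs precisely in the chambers $(+,-,-)$ and $(-,-,-)$: there the inequalities $1+e^y<e^x+e^u$ and $1+e^u<e^x+e^y$ force $|e^y-e^u|<e^x-1$, so the discarded interval $[|e^y-e^u|,\,e^x-1]$ is nonempty and carries the additional term
\begin{equation*}
\pi\left(\pi-\arccos\left(\frac{(e^x-1)^2-e^{2y}-e^{2u}}{2e^{y+u}}\right)\right)\neq 0 .
\end{equation*}
A quick consistency check confirms this: as a point of chamber $(+,-,-)$ approaches the boundary face $e^x=1+e^y+e^u$, the tabulated limits $r_0=e^x-1$ and $r_1=e^y+e^u$ coalesce, so the displayed integral tends to $0$, whereas $\partial N_f/\partial x$ must tend to $1$ (the gradient in the adjacent complement component is $(1,0,0)$); the term above supplies exactly the missing $\pi^2$. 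So you must either add this boundary term in those two chambers or justify its absence; note that the paper's own proof, which reads the limits off pictures of the intersection of the annulus with $\partial D(0,e^x)$, is silent on the very same point. As a proof of the proposition as literally stated, your proposal therefore has a genuine gap at exactly the place you flagged.
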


\begin{proof}
We need to calculate the area of $T$ and divide by the area of $\mathbb{T}^2$.
Let $L_{\gamma}$ be the line in the torus defined by $\{  \gamma=\varphi-\theta ; -\pi<\varphi,\theta <\pi  \}$. 
Consider the function $\operatorname{Arm}_{\varphi,\theta}:\mathbb{T}^2\to\C$ given by
\begin{equation*}
\operatorname{Arm}_{y,u}(\varphi,\theta) = 1+e^{y+i\varphi}+e^{u+i\theta}.
\end{equation*}
A straight forward calculation gives that the Jacobian of that function
is constant along $L_{\gamma}$. If $D(a,b)$ is the disc with center $a$ and radii $b$, then this means that
\begin{eqnarray*}
\frac{\Length(L_{\gamma}\cap T)}{\Length(L_{\gamma})}&=&
\frac{\Length(\operatorname{Arm}_{\varphi,\theta}(L_{\gamma})\cap D(0,e^x))}{\Length(\operatorname{Arm}_{\varphi,\theta}(L_{\gamma}))}  \\
&=& \frac{\Length(\partial D(1,r)\cap D(0,e^x))}{\Length(\partial D(1,r))}=\frac{\alpha}{\pi},
\end{eqnarray*}
where $\alpha$ is the angle that $w+t$ must have precisely to hit $D(0,e^x)$ and where $r=|w+t|$. Integrating $\alpha$ over $\gamma$ when $0\leq \gamma \leq \pi$ we get 
\begin{equation*}
\frac{\partial N_f}{\partial x} = \frac{1}{2\pi^2}\int_0^{2\pi}\alpha(\gamma)d\gamma = \frac{1}{\pi^2}\int_0^{\pi}\alpha(\gamma)d\gamma
\end{equation*}
for symmetry reasons.
Now, rewrite $\alpha$ and $\gamma$ in terms of $r$ just by solving the triangles in Figure \ref{fig:BevisTriangel}.

\begin{figure}[h]
\begin{center}
\includegraphics[height=5cm]{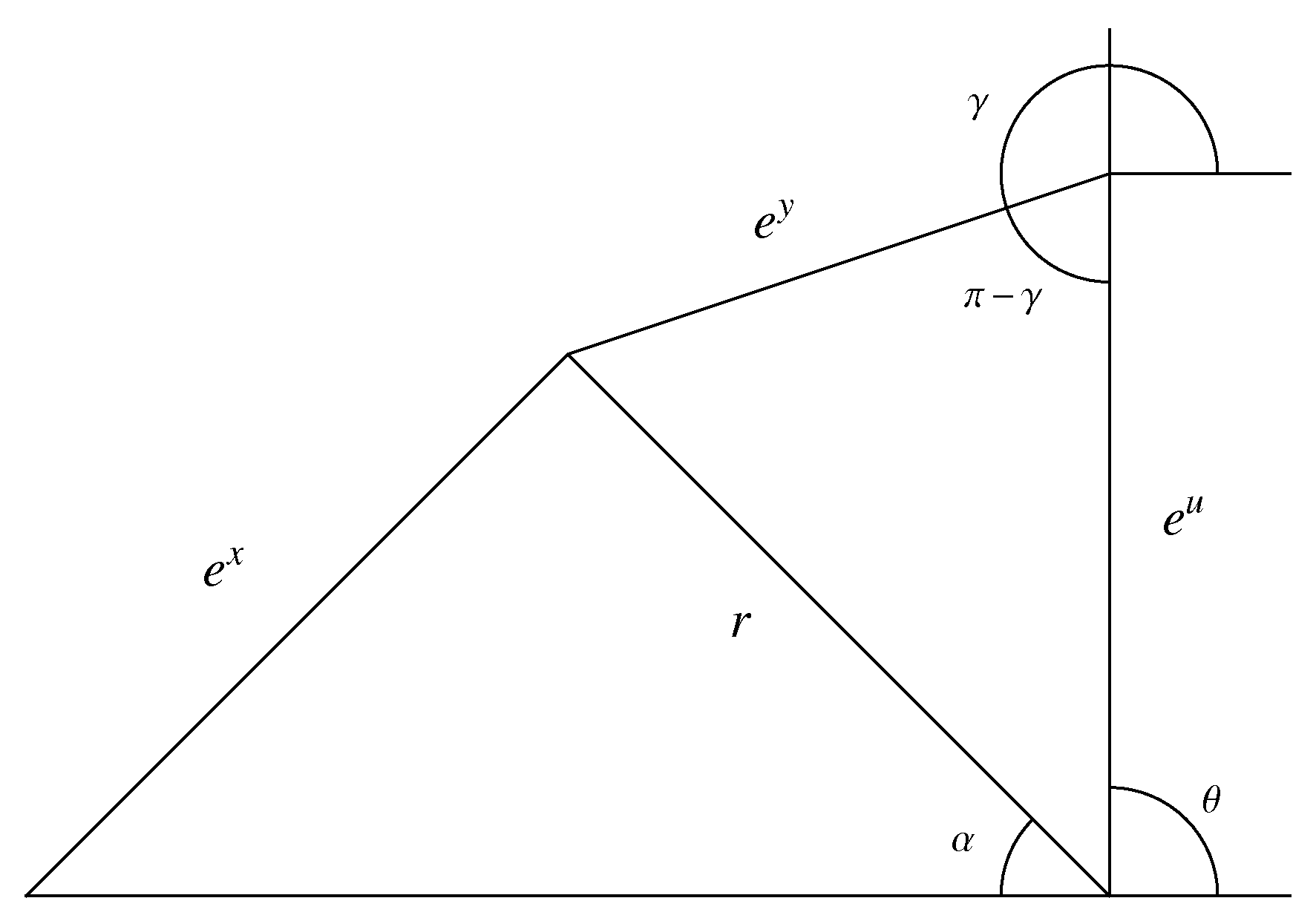}
\caption{}
\label{fig:BevisTriangel}
\end{center}
\end{figure}

This gives
\begin{eqnarray*}
&&\alpha = \arccos\left(\frac{1+r^2-e^{2x}}{2r}\right) \quad \text{ and} \\
&&\gamma = \arccos\left(\frac{r^2-e^{2y}-e^{2u}}{2e^{y+u}}\right).
\end{eqnarray*}
The only thing left to do is to figure out what the integration limits should be.
Let $\mathcal{W}$ be the image of the function $\operatorname{Arm}_{y,u}$. Figures \ref{1} - \ref{6} represent $\mathcal{W}$ and $\partial D(0,e^x)$ in the different chambers, and since the integration is over $r$ corresponding to points on the intersection of $\mathcal{W}$ and $\partial D(0,e^x)$ the integration limits can easily be seen in the figures.
Note that the minus sign comes from the fact that the integration limits should change places to get the ones in the theorem.
\end{proof}

\begin{figure}[h]
\begin{minipage}[t]{.325\linewidth}
\centering \includegraphics[width=3cm]{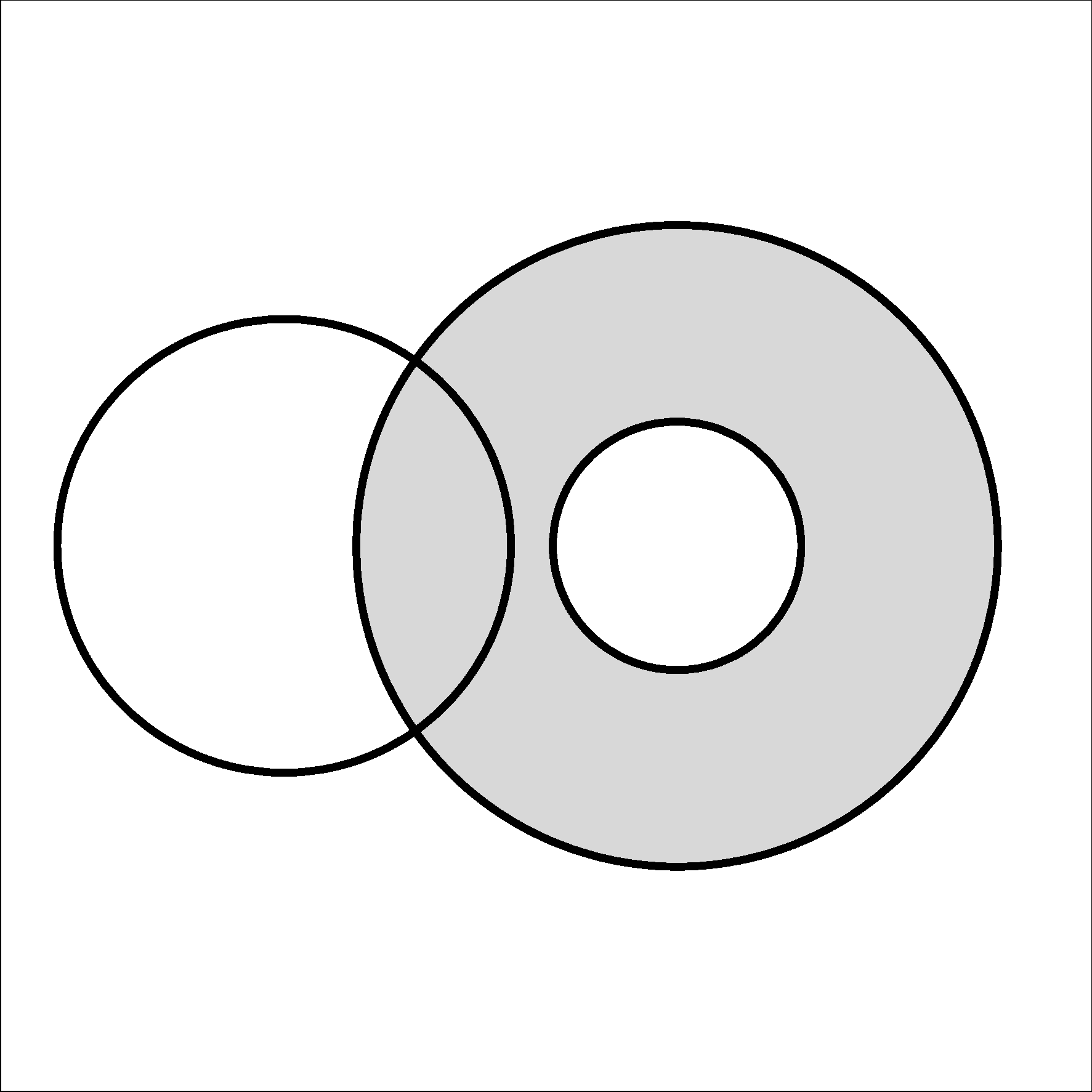}
\caption{$(+,+,+)$}\label{1} 
\end{minipage}
\begin{minipage}[t]{.325\linewidth}
\centering \includegraphics[width=3cm]{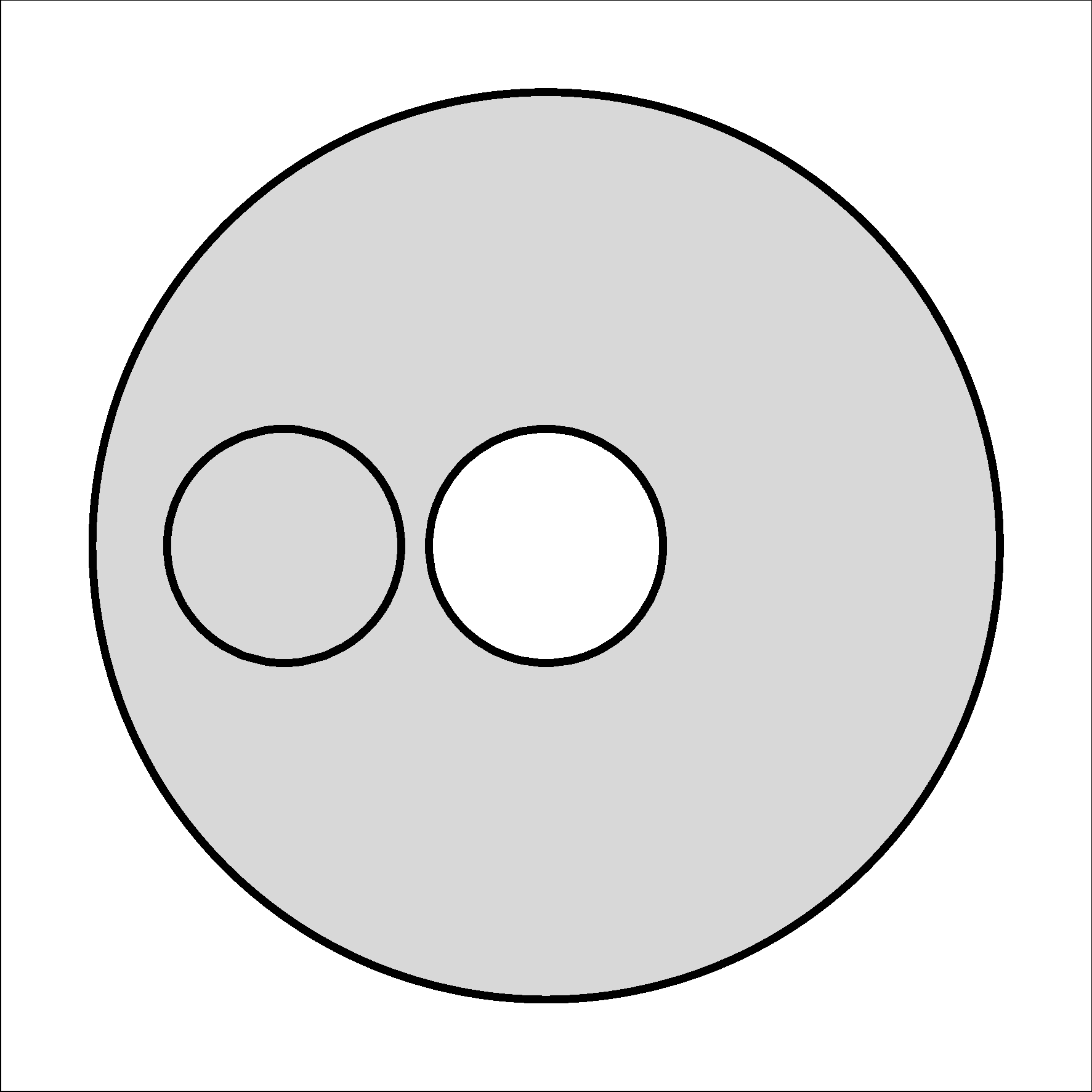}
\caption{$(-,+,+)$} \label{2}
\end{minipage}
\begin{minipage}[t]{.325\linewidth}
\center{ \includegraphics[width=3cm]{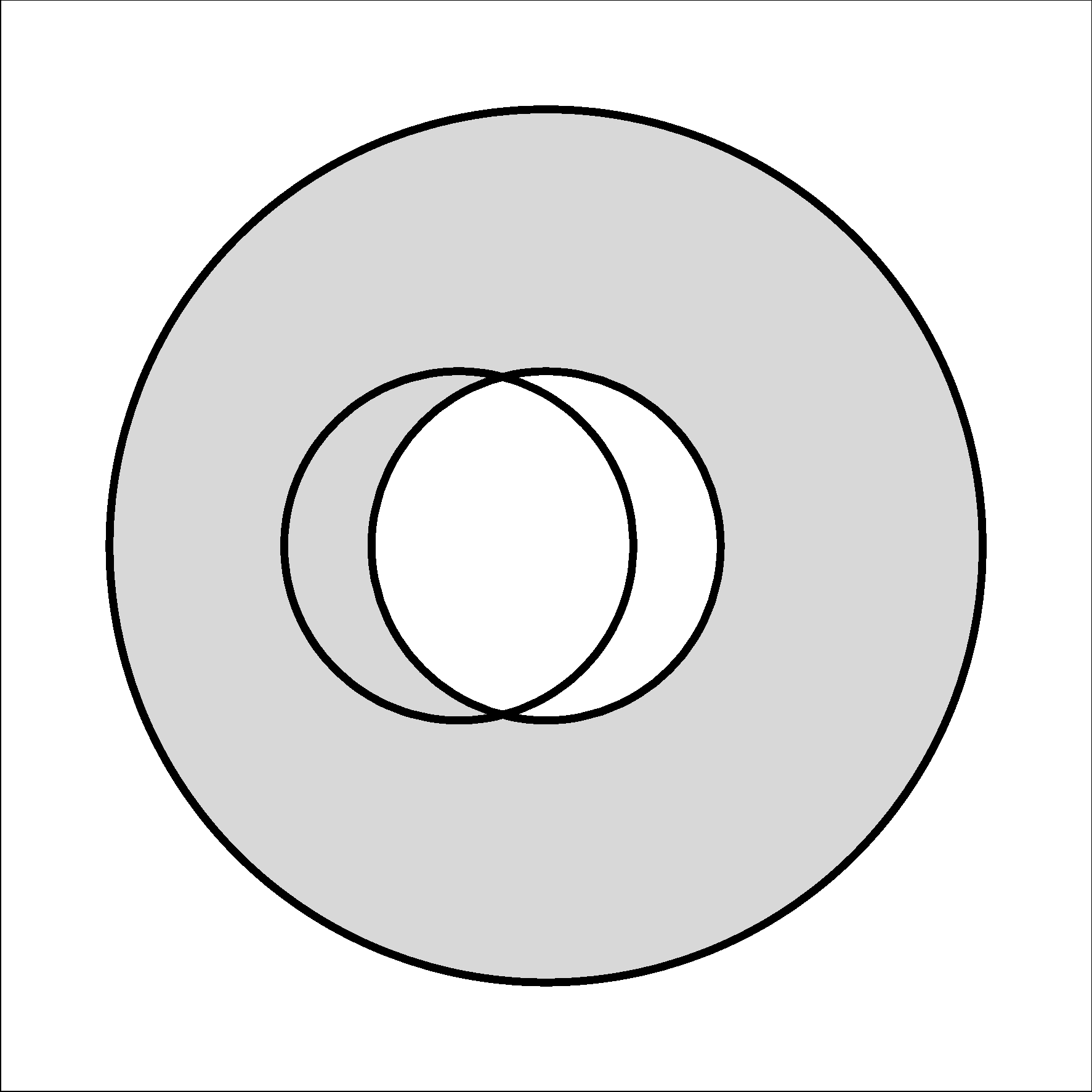} 
\caption{$(-,-,+), (-,+,-)$}\label{3}}
\end{minipage}
\end{figure}

\begin{figure}[h]
\begin{minipage}[t]{.323\linewidth}
\centering \includegraphics[width=3cm]{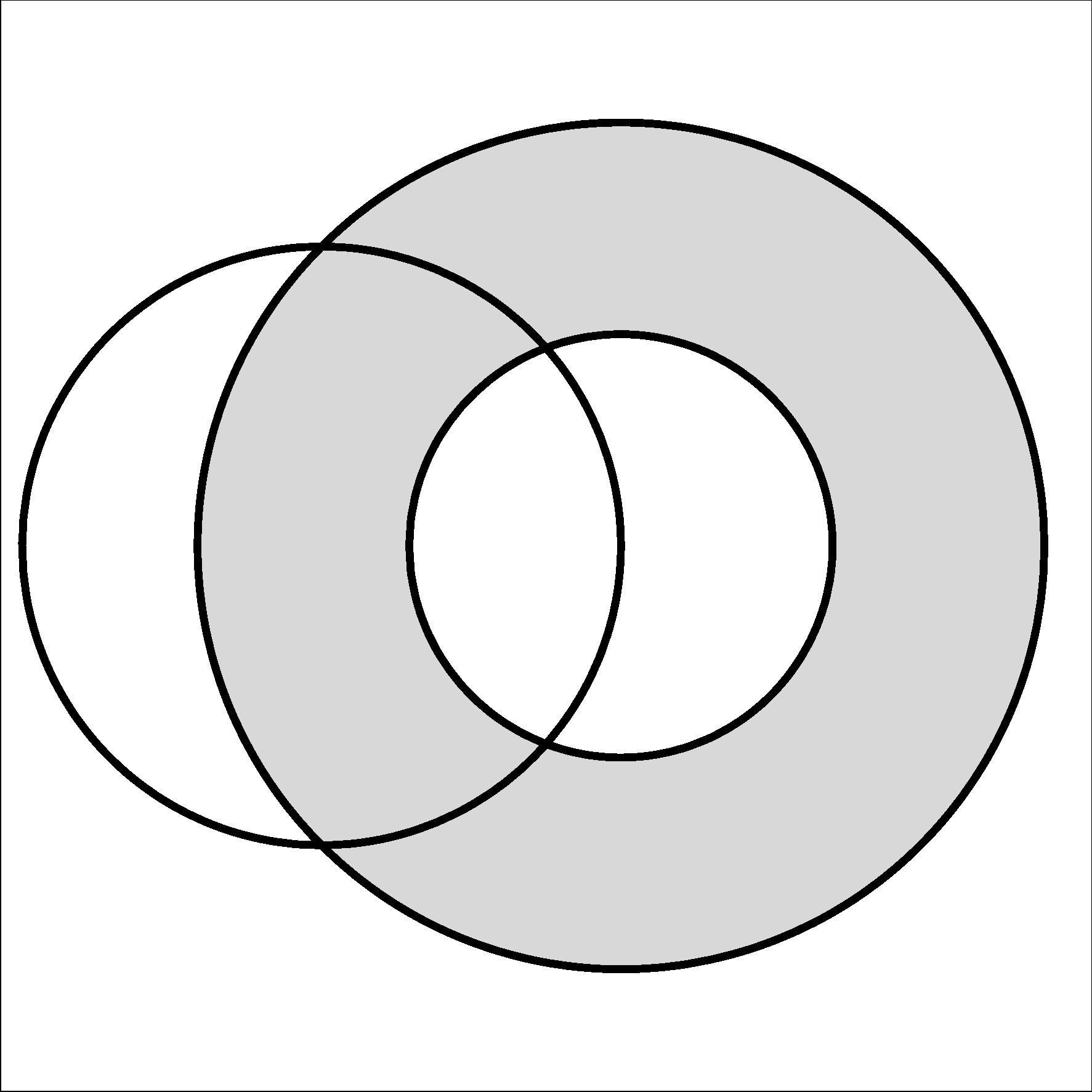} 
\caption{$(+,-,+),(+,+,-)$}\label{4}
\end{minipage}
\begin{minipage}[t]{.323\linewidth}
\centering \includegraphics[width=3cm]{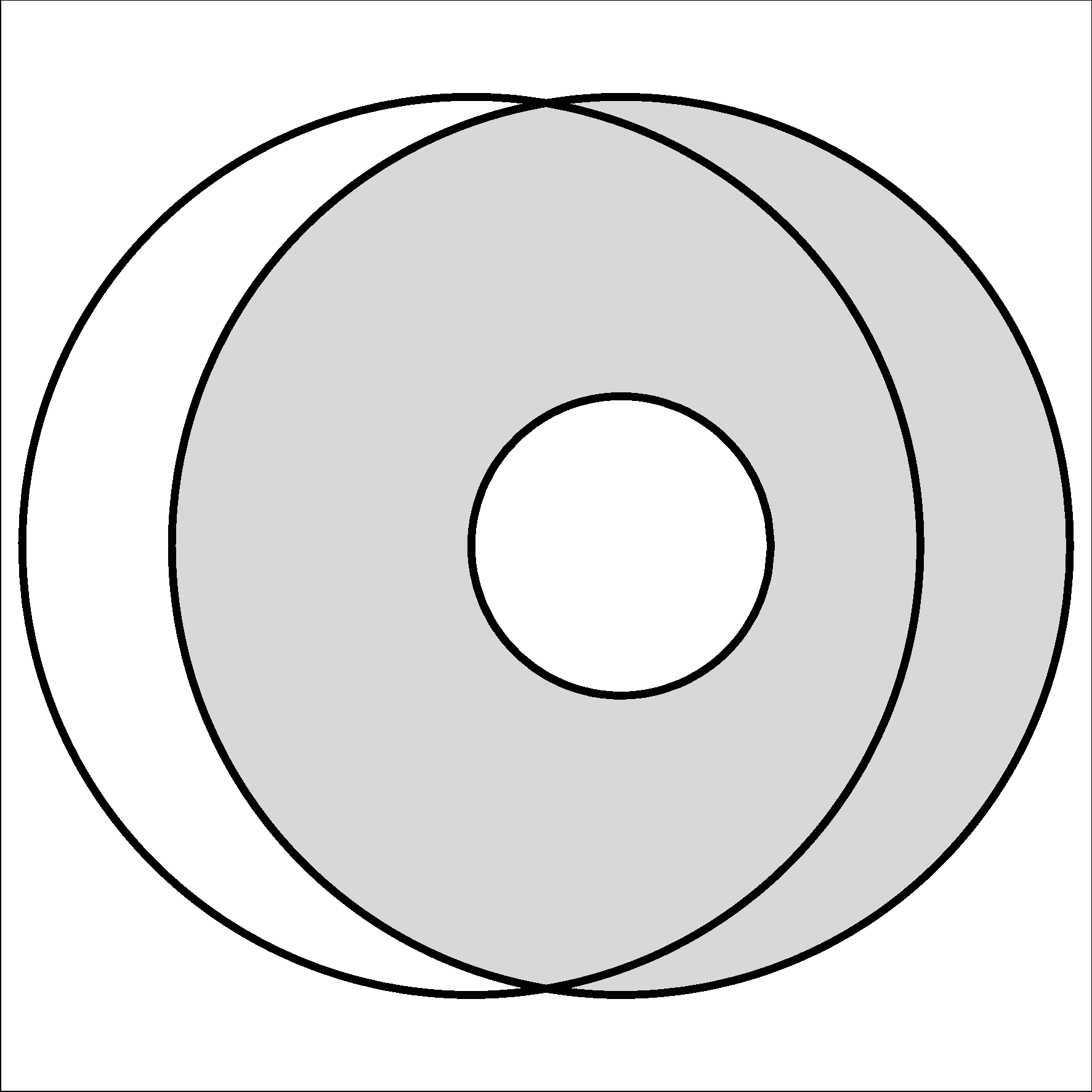} 
\caption{$(+,-,-)$}\label{5}
\end{minipage}
\begin{minipage}[t]{.323\linewidth}
\centering \includegraphics[width=3cm]{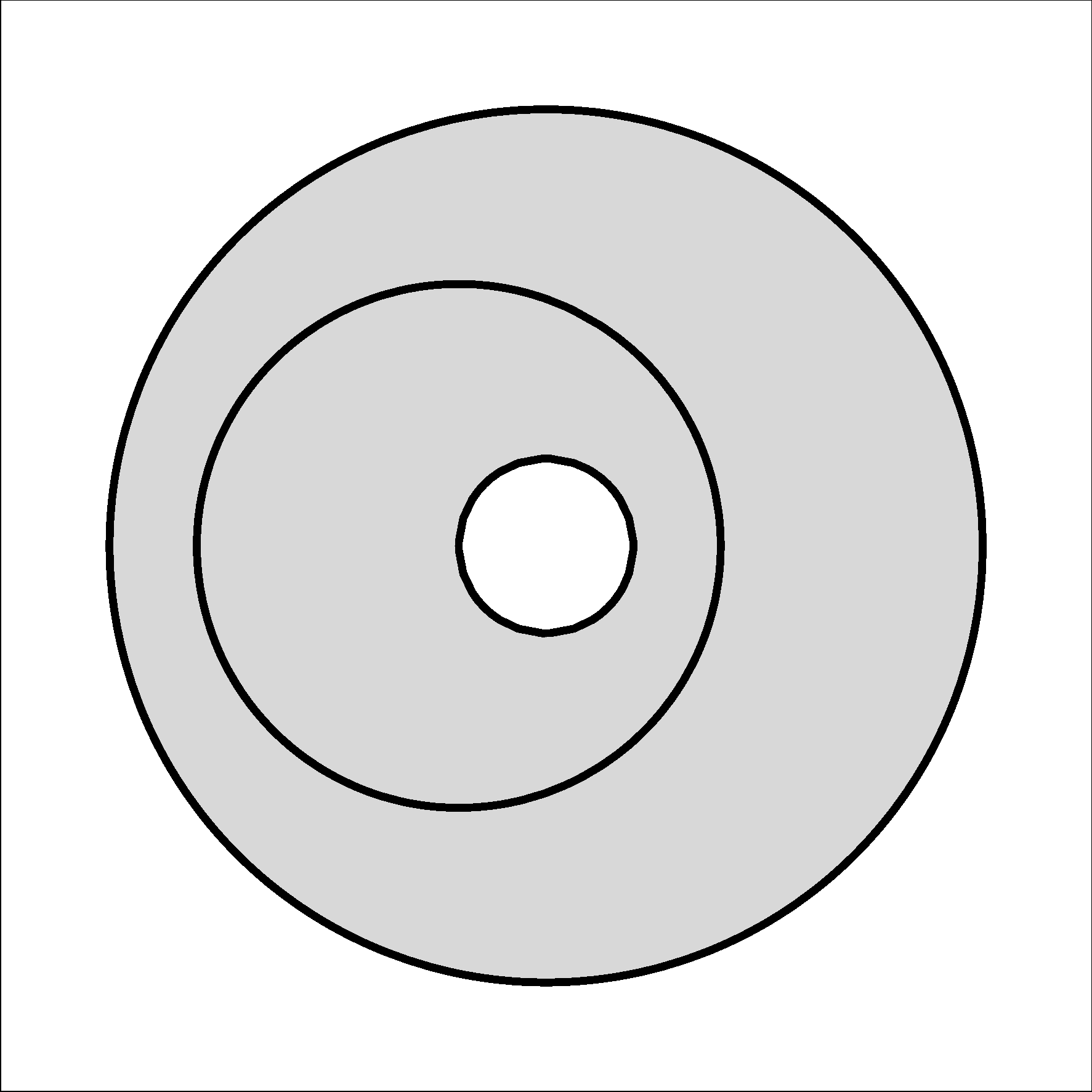} 
\caption{$(-,-,-)$}\label{6}
\end{minipage}
\end{figure}

\begin{remark}
{\rm
It should be remarked that the integral (\ref{eq1}), and hence the derivatives, are continuous inside the amoeba, even at the contour. This can be seen from the fact that 
the integral in (\ref{eq1}) can be written on the form 
\begin{equation*}
\int_{r_0^2}^{r_1^2}\frac{-\alpha(x,y,u,\sqrt{s})}{\sqrt{-(s-(e^y+e^u)^2)(s-(e^y-e^u)^2)}}ds,
\end{equation*}
see the calculations in Lemma \ref{teklemma2} below,
and that this integral is bounded in the closure of each chamber.
}
\end{remark}

Let
\begin{align}
&\phi(r,x,y,u) := \arccos\left(\frac{1+r^2-e^{2x}}{2r}\right) \quad \text{and } \label{phi}\\ 
&\psi(r,x,y,u) := \arccos\left(\frac{r^2-e^{2y}-e^{2u}}{2e^{y+u}}\right).\label{psi}
\end{align}
\noindent
Then even though $x$ and $y$ appear in the integration limits $r_0$ and $r_1$ we get the following lemma.

\begin{lemma}\label{teklemma}
Outside of the contour, and for $r_0$ and $r_1$ as above,
\begin{eqnarray*}
&&\frac{\partial }{\partial x}\int_{r_0}^{r_1}\phi\frac{d}{dr}\psi dr = \int_{r_0}^{r_1}\frac{\partial }{\partial x} \phi\frac{d}{dr}\psi dr\quad \text{and} \\
&&\frac{\partial }{\partial y}\int_{r_0}^{r_1}\phi\frac{d}{dr}\psi dr = -\int_{r_0}^{r_1}\frac{\partial }{\partial y} \psi\frac{d}{dr}\phi dr,
\end{eqnarray*}
where $\phi$ and $\psi$ are defined by (\ref{phi}) and (\ref{psi}).
\end{lemma}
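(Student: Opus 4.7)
The plan is to differentiate under the integral sign using Leibniz's rule and then verify that all boundary contributions coming from the $x$- or $y$-dependence of $r_0,r_1$ vanish. The second identity additionally requires an integration by parts in $r$ in order to swap the roles of $\phi$ and $\psi$ in the derivative.

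For the first identity I would apply Leibniz directly, using that $\psi$ does not depend on $x$: this produces $\int_{r_0}^{r_1}(\partial\phi/\partial x)\,\psi'\,dr$ plus two boundary terms of the form $\phi(r_j)\,\psi'(r_j)\,\partial r_j/\partial x$ for $j=0,1$. From the table in Proposition~\ref{derivata} the only endpoints depending on $x$ are $1\pm e^x$ and $e^x-1$; substituting into the definition \eqref{phi} of $\phi$ gives $\phi(1\pm e^x)=\arccos(\pm 1)$, which is $0$ in both cases, while for every other endpoint $\partial r_j/\partial x=0$ already. So the only boundary term left to deal with is the one at $r_0=e^x-1$ in chambers $(+,-,-)$ and $(-,-,-)$, where $\phi(r_0)=\pi$.

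For the second identity I would again start with Leibniz (now $\phi$ is $y$-independent) and write $\partial\psi'/\partial y=(d/dr)\psi_y$; an integration by parts turns the resulting interior integral into $[\phi\,\psi_y]_{r_0}^{r_1}-\int_{r_0}^{r_1}\phi'\,\psi_y\,dr$. Summing the Leibniz boundary piece with the integration-by-parts boundary piece at each endpoint yields the single expression $\phi(r_j)\bigl[\psi'(r_j)\,\partial r_j/\partial y+\psi_y(r_j)\bigr]$, which is precisely $\phi(r_j)\cdot\frac{d}{dy}\psi\bigl(r_j(y,u),y,u\bigr)$, i.e.\ the total $y$-derivative of $\psi$ evaluated along the endpoint curve. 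Now for every endpoint $r_j$ that actually depends on $y$ the table gives $r_j\in\{e^y+e^u,\,|e^y-e^u|\}$, and at these values $\psi$ is identically $0$ or $\pi$, so the total $y$-derivative is zero; for endpoints independent of $y$ we reduce to $\phi(1\pm e^x)=0$ or to the same exceptional case $r_j=e^x-1$.

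The main obstacle is this exceptional endpoint $r_0=e^x-1$ in the two chambers $(+,-,-)$ and $(-,-,-)$, at which $\phi=\pi$ rather than $0$ and the naive vanishing argument breaks down. I would handle it by returning to the geometric construction in the proof of Proposition~\ref{derivata}: the restriction of the integration interval to $[r_0,r_1]$ in \eqref{eq1} suppresses precisely the sub-regime of the fibre over $(x,y,u)$ on which the circle $\partial D(1,r)$ lies entirely inside $D(0,e^x)$, where $\alpha$ attains its maximum value $\pi$; differentiating this suppressed contribution in $x$ (respectively $y$) produces exactly what is needed to absorb the leftover boundary term. With this check in place, every boundary contribution in both identities disappears, and the remaining steps---Leibniz, the integration by parts, and the evaluations $\phi(1\pm e^x)=0$, $\psi(e^y+e^u)=0$, $\psi(|e^y-e^u|)=\pi$---are routine.
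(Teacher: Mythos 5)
Your core computation---Leibniz's rule, an integration by parts in $r$, and the observation that the two boundary pieces at each endpoint assemble into $\phi(r_j)$ times the total $y$-derivative of $\psi$ along the endpoint curve---is the same cancellation the paper performs (the paper writes the Leibniz step as an explicit difference quotient, but it is the identical bookkeeping). Where you differ is that you actually confront the endpoint $r_0=e^x-1$ in the chambers $(+,-,-)$ and $(-,-,-)$, where $\phi(r_0)=\arccos(-1)=\pi$. The paper's proof silently skips this: it asserts that the lemma follows once one treats the case where both endpoints depend on $y$, which disposes of the endpoints $e^y+e^u$ and $\pm(e^y-e^u)$ (where $\psi$ is constantly $0$ or $\pi$) and of $1\pm e^x$ (where $\phi=0$), but says nothing about $r_0=e^x-1$. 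You are right that the vanishing argument genuinely breaks down there: the surviving term is $-\pi\,\partial_y\psi(e^x-1,y,u)$ for the second identity (and $-\pi e^x\,(\partial_r\psi)(e^x-1)$ for the first), and since $|e^y-e^u|<e^x-1<e^y+e^u$ throughout those two open chambers, this term is generically nonzero. So you have located a real gap in the paper's own proof, and in fact the identities as literally stated fail in those two chambers.

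The weak point of your proposal is the final step. The leftover term cannot be ``absorbed'' within the quantity $\int_{r_0}^{r_1}\phi\,\psi'\,dr$ that the lemma is about; it is cancelled only by the $y$- (respectively $x$-) derivative of the contribution $\pi\bigl(\pi-\psi(e^x-1,y,u)\bigr)$ coming from the sub-annulus where $\alpha\equiv\pi$, and that contribution is absent from the right-hand side of \eqref{eq1} as printed. (One can check it must be there: in the chamber $(-,-,-)$ with $x=y=u=t\to\infty$ the integral in \eqref{eq1} tends to $0$ while $\pi^2\,\partial N_f/\partial x\to\pi^2/3$, which is exactly the limit of $\pi(\pi-\psi(e^x-1))$.) So the honest conclusion of your argument is that Proposition~\ref{derivata} and Lemma~\ref{teklemma} each need an extra term in the chambers $(+,-,-)$ and $(-,-,-)$, and that these two corrections cancel, leaving the formulas of Lemma~\ref{teklemma2} intact. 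Your closing claim that ``every boundary contribution in both identities disappears'' overstates this: what you have actually shown is that the composite quantities $\partial^2N_f/\partial x^2$ and $\partial^2N_f/\partial x\partial y$ come out as asserted downstream, not that the displayed identities hold verbatim in all eight chambers. Make that distinction explicit and state the amended identity in the two exceptional chambers; as it stands the last paragraph papers over the very discrepancy you correctly detected.
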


\begin{proof}
We prove the second equality. The first is proved along the same lines.
The lemma follows if we prove it for the case when both $r_0$ and $r_1$ depend on $y$, i.e.,
\begin{equation*}
r_1 = e^x+e^y, \qquad r_0 = \pm e^y-e^u.
\end{equation*}
We first note that 
\begin{align}
e^y\ddy \phi(r_1) &= \left( \ddr\phi\right) (r_1) \quad \text{and} \label{lem1} \\ 
-e^y\ddy \phi(r_0) &= \pm \left( \ddr\phi \right)(r_0) \label{lem2}.
\end{align} 
We want to prove that 
\begin{equation}\label{lemekv}
\ddy \int_{r_0}^{r_1}\phi\ddr \psi dr + \int_{r_0}^{r_1}\ddy \psi \ddr \phi dr = 0.
\end{equation}

By using integration by parts and the definition of derivatives the left hand side of \eqref{lemekv} is
\begin{align*}
& \frac{\partial}{\partial y}\left( \left[ \phi\psi \right]_{r_0}^{r_1} - \int_{r_0}^{r_1} \psi\frac{d}{dr}\phi dr \right) + 
\int_{r_0}^{r_1}\frac{\partial}{\partial y} \psi \frac{d}{dr}\phi dr \\
= & \frac{\partial}{\partial y} \left[ \psi\phi\right]_{r_0}^{r_1}  \\
- & \lim_{h\to 0}\left( \frac{1}{h}\int_{r_0(y+h)}^{r_1(y+h)} \psi(y+h)\ddr \phi dr 
-\frac{1}{h}\int_{r_0(y)}^{r_1(y)} \psi(y)\ddr \phi dr\right)\\
+ & \int_{r_0(y)}^{r_1(y)} \lim_{h\to 0} \frac{\psi(y+h)-\psi(y)}{h}\ddr \phi dr.
\end{align*}
By linearity this is equal to

\begin{align*}
&\frac{\partial}{\partial y} \left[ \phi\psi \right]_{r_0}^{r_1}  + \lim_{h \to 0}\frac{1}{h}\int_{r_0(y)}^{r_0(y+h)}\psi(y+h)\ddr\phi dr  \\
-&\lim_{h\to 0}\frac{1}{h}\int_{r_1(y)}^{r_1(y+h)}\psi(y+h)\ddr \phi dr,
\end{align*}
and since $\psi$ is bounded we get that the left hand side of \eqref{lemekv} is equal to
\begin{align*}
&\ddy \left[ \phi\psi \right]_{r_0}^{r_1} + \lim_{h\to 0}\frac{1}{h}\left( r_0(y+h)-r_0(y) \right) \psi\left( \ddr\phi \right)\bigg|_{r_0} \\
-&\lim_{h\to 0}\frac{1}{h}\left( r_1(y+h)-r_1(y) \right) \psi\left( \ddr\phi \right) \bigg|_{r_1}.
\end{align*}
Now,
\begin{eqnarray*}
&&\frac{1}{h}(r_1(y+h)-r_1(y)) = \frac{1}{h}(e^y(e^h-1))\to e^y \quad \text{when}\quad h\to 0 \quad \text{and}\\
&&\frac{1}{h}(r_0(y+h)-r_0(y)) = \pm \frac{1}{h}(e^y(e^h-1))\to \pm e^y \quad \text{when}\quad h\to 0,
\end{eqnarray*}
so \eqref{lem1} and \eqref{lem2} shows that \eqref{lemekv} holds.
\end{proof}

Lemma \ref{teklemma} will be useful to calculate the second order derivatives of $N_f$.

\begin{lemma}\label{teklemma2}
For $(x,y,u)\in\Af\setminus\mathcal{C}$ and with $r_0$ and $r_1$ as above we have
\begin{align}
&\frac{\partial^2 N_f}{\partial x^2} = \frac{2e^{2x}}{\pi^2}\int_{r_0^2}^{r_1^2}\frac{1}{\sqrt{(s-A)(s-B)(s-C)(s-D))}}ds \quad\text{and} \label{integralett} \\
&\frac{\partial^2 N_f}{\partial x\partial y} = \frac{-1}{2\pi^2}\int_{r_0^2}^{r_1^2}\frac{s^2 +P_1 s + P_2}{s\sqrt{(s-A)(s-B)(s-C)(s-D))}}ds, \label{integraltva}
\end{align}
where
\begin{align*}
A & = (1+e^x)^2, \quad B = (e^y+e^u)^2, \\
C & = (1-e^x)^2,\quad D = (e^y-e^u)^2
\end{align*}
and 
\begin{equation*}
P_1 = (e^{2x}+e^{2y}-1-e^{2u}),\quad P_2 = (1+e^x)(1-e^x)(e^y+e^u)(e^u-e^y).
\end{equation*}
\end{lemma}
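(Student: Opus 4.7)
The plan is to start from Proposition~\ref{derivata}, which gives
\[
\pi^2\frac{\partial N_f}{\partial x} \;=\; -\int_{r_0}^{r_1}\phi\,\frac{d\psi}{dr}\,dr,
\]
and then apply Lemma~\ref{teklemma} to differentiate under the integral, pushing the $x$-derivative onto $\phi$ in the first case and the $y$-derivative onto $\psi$ in the second case. Concretely, the two formulas I would aim for after this step are
\[
\pi^2\frac{\partial^2 N_f}{\partial x^2} = -\int_{r_0}^{r_1}\frac{\partial\phi}{\partial x}\,\frac{d\psi}{dr}\,dr,
\qquad
\pi^2\frac{\partial^2 N_f}{\partial x\partial y} = \int_{r_0}^{r_1}\frac{\partial\psi}{\partial y}\,\frac{d\phi}{dr}\,dr.
\]
After that the proof is a direct (if slightly bulky) calculus computation.

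The next step is to compute the four elementary derivatives of $\phi$ and $\psi$. The essential algebraic identity I will use is that the radicand coming from $\arccos$ factors as a difference of two squares twice:
\[
4r^{2}-(1+r^{2}-e^{2x})^{2} \;=\; \bigl((1+e^{x})^{2}-r^{2}\bigr)\bigl(r^{2}-(1-e^{x})^{2}\bigr) \;=\; (A-r^{2})(r^{2}-C),
\]
and analogously
\[
4e^{2(y+u)}-(r^{2}-e^{2y}-e^{2u})^{2} \;=\; (B-r^{2})(r^{2}-D).
\]
These are both nonnegative on $[r_0,r_1]$ inside the relevant chamber, so the positive square roots make sense. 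From here, a direct differentiation gives
\[
\frac{\partial\phi}{\partial x} = \frac{2e^{2x}}{\sqrt{(A-r^{2})(r^{2}-C)}},\qquad
\frac{d\psi}{dr} = -\frac{2r}{\sqrt{(B-r^{2})(r^{2}-D)}},
\]
\[
\frac{\partial\psi}{\partial y} = \frac{r^{2}+e^{2y}-e^{2u}}{\sqrt{(B-r^{2})(r^{2}-D)}},\qquad
\frac{d\phi}{dr} = -\frac{r^{2}+e^{2x}-1}{r\sqrt{(A-r^{2})(r^{2}-C)}}.
\]

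For \eqref{integralett}, multiplying $\partial_x\phi$ by $d\psi/dr$ yields $-4e^{2x}r/\sqrt{(A-r^{2})(B-r^{2})(r^{2}-C)(r^{2}-D)}$; substituting $s=r^{2}$, $ds=2r\,dr$ and using $(A-s)(B-s)=(s-A)(s-B)$ produces exactly the claimed integrand $2e^{2x}/\sqrt{(s-A)(s-B)(s-C)(s-D)}$. For \eqref{integraltva}, the product $\partial_y\psi \cdot d\phi/dr$ gives a numerator $-(r^{2}+e^{2y}-e^{2u})(r^{2}+e^{2x}-1)/r$; after the same substitution this becomes $-(s+e^{2y}-e^{2u})(s+e^{2x}-1)/(2s)$ times the radical. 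Expanding the numerator as $s^{2}+P_1 s + P_2$ with $P_1 = e^{2x}+e^{2y}-1-e^{2u}$ and
\[
P_2 = (e^{2y}-e^{2u})(e^{2x}-1) = (1+e^{x})(1-e^{x})(e^{y}+e^{u})(e^{u}-e^{y})
\]
then matches the stated $P_1,P_2$.

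The only point that requires genuine care, rather than mechanical computation, is the sign bookkeeping in the square roots. In each chamber the quantities $A-s$, $B-s$, $s-C$, $s-D$ have definite and compatible signs on $[r_0^2,r_1^2]$ (this is essentially what the table of $r_0,r_1$ encodes, and it is also the reason the integral is real in the first place), so rewriting $\sqrt{(A-s)(s-C)(B-s)(s-D)}$ as $\sqrt{(s-A)(s-B)(s-C)(s-D)}$ with no extra sign is legitimate. I expect this sign analysis, rather than any one algebraic step, to be the most delicate part of the write-up; everything else is a bounded-length calculation from the identities above.
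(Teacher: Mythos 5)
Your proposal is correct and follows essentially the same route as the paper: differentiate the formula of Proposition~\ref{derivata} under the integral via Lemma~\ref{teklemma}, compute the elementary derivatives of $\phi$ and $\psi$, substitute $s=r^2$, and use the factorization $4a^2b^2-(c^2-a^2-b^2)^2=-(c^2-(a+b)^2)(c^2-(a-b)^2)$ to produce the quartic under the square root. The only difference is cosmetic: you carry out the mixed-derivative case explicitly (and verify the sign compatibility of the four factors on $[r_0^2,r_1^2]$), where the paper dismisses it as ``proved in a similar way.''
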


\begin{proof}
We start with the first equality. By Proposition \ref{derivata} and Lemma~\ref{teklemma} we get that $(\partial^2 /\partial x^2) N_f$ is equal to
\begin{equation*} -\frac{1}{\pi^2}\int_{r_0}^{r_1}\ddx\arccos\left(\frac{1+r^2-e^{2x}}{2r}\right)\ddr\arccos\left(\frac{r^2-e^{2y}-e^{2u}}{2e^{y+u}}\right) dr.
\end{equation*}
An easy calculation shows that
\begin{eqnarray*}
\ddx\arccos\left(\frac{1+r^2-e^{2x}}{2r}\right) &=& \frac{2e^{2x}}{\sqrt{4r^2 -(1+r^2-e^{2x})^2}},\\
\ddr\arccos\left(\frac{r^2-e^{2y}-e^{2u}}{2e^{y+u}}\right) &=& \frac{-2r}{\sqrt{4e^{2(y+u)}-(r^2-e^{2y}-e^{2u})^2}}.
\end{eqnarray*}
Now, make the change of variables $s=r^2$ and make use of the formula 
\begin{eqnarray*}
4a^2b^2 - (c^2-a^2-b^2)^2 &=& - (a^2-(b+c)^2)(a^2-(b-c)^2) \\
&=& - (b^2-(a+c)^2)(b^2-(a-c)^2)\\
&=& - (c^2-(a+b)^2)(c^2-(a-b)^2)
\end{eqnarray*}
that is valid for all $a$ and $b$. The first equation in the lemma is thereby proved. 
The second equation is proved in a similar way.
\end{proof}
\noindent
Note that $r_1^2$ is either $A$ or $B$ and $r_0^2$ is either $C$ or $D$.
We see that the integrals in (\ref{integralett}) and (\ref{integraltva}) depend on $x,y$ and $u$ in a smooth manner except at the singular points where $A=B, C=D, B=C$  and possibly 
when $r_0 = 0$, i.e., when $1=e^x$ or when $e^y=e^u$.
But $P_2=0$ at the points where $1=e^x$ or when $e^y=e^u$, and thus there might be that the integral converges anyway. That is actually the case.
To see this it is enough to realize that 
\begin{equation*}
\lim_{\epsilon \to 0}\int_{\epsilon}^M\frac{\epsilon}{s\sqrt{s-\epsilon}}ds = 0,
\end{equation*}
for some constant $M\neq 0$. But that is true because
\begin{align*}
\lim_{\epsilon \to 0}\int_{\epsilon}^M\frac{\epsilon}{s\sqrt{s-\epsilon}}ds=\lim_{\epsilon \to 0}\sqrt{\epsilon}\int_{1}^{M/\epsilon}\frac{1}{s\sqrt{s-1}}ds.
\end{align*}
\noindent
Now, a similar argument gives that $\partial^2 N_f / \partial x\partial y$ not only is continuous but also smooth at the points where $1=e^x$ and $e^y=e^u$. Note that the equality $B=C$ hold exactly on the boundary of the amoeba and that the equations  $A=B$ and $C=D$ are true exactly on the other part of the contour.
We therefore have the following proposition.

\begin{proposition}
Let $f=1+z+w+t$. Then $\mu_f$ is smooth outside the contour of the amoeba of $f$.
\end{proposition}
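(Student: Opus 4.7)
Since $N_f$ is $C^2$ inside $\Af$ (as noted in the remark following Proposition~\ref{derivata}), the Monge--Amp\`ere measure $\mu_f$ has density $\det(\hess N_f)$ with respect to Lebesgue measure on $\Af$, so smoothness of $\mu_f$ on $\Af\setminus\Cont$ reduces to smoothness of every second partial of $N_f$ there. By the $S_3$-symmetry of $f=1+z+w+t$ in the variables $z,w,t$, every second partial is, after a suitable permutation of the variables, of the form of either \eqref{integralett} or \eqref{integraltva}. My plan is therefore to read off the smoothness of these two families from the explicit integral representations in Lemma~\ref{teklemma2} together with the analysis of their singular loci sketched in the paragraph immediately preceding the proposition.

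First I would handle the generic case. Off the contour, the four roots $A,B,C,D$ of the quartic under the radical are distinct: the coincidences $A=B$ and $C=D$ define the interior part of $\Cont$, and $B=C$ defines the boundary of $\Af$. Hence on $\Af\setminus\Cont$ the pair $\{r_0^2, r_1^2\}$ consists of two neighbouring simple roots and the other two roots are strictly separated from $[r_0^2, r_1^2]$. The substitution $s = r_0^2 + (r_1^2 - r_0^2)\sin^2\theta$ removes the two endpoint square-root singularities and rewrites each integral as $\int_0^{\pi/2} h(\theta, x, y, u)\,d\theta$ with $h$ jointly smooth on a neighbourhood of any point of $\Af\setminus(\Cont\cup\{r_0=0\})$. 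Differentiating under the integral sign any number of times then proves smoothness of the corresponding second partial there.

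It remains to handle the locus $\{r_0=0\}$, namely the hypersurfaces $\{1=e^x\}$ and $\{e^y=e^u\}$, which sit inside $\Af$ but outside $\Cont$. On these hypersurfaces $P_2 = (1-e^x)(1+e^x)(e^y+e^u)(e^u-e^y)$ vanishes, exactly cancelling the $1/s$ factor in \eqref{integraltva}. The remark preceding the proposition extracts this cancellation as a continuity statement via a single limit. To upgrade it to smoothness I would factor $P_2 = r_0\,Q$ with $Q$ smooth and non-vanishing on the hypersurface, and combine the previous substitution with a further rescaling $\sin\theta = r_0\tau/\sqrt{r_1^2-r_0^2}$ near $\theta=0$, so that the $1/s$ pole is absorbed into an integrand that is jointly smooth in $(\tau, x, y, u)$ and extends smoothly through the transverse parameter $r_0 = 0$; away from $\theta=0$ the previous step already gives smooth dependence, and a partition of unity joins the two pieces.

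The main obstacle is this last step, in which one must upgrade a single-variable limit into joint smoothness in three parameters. The cleanest realisation is to reduce \eqref{integralett} and \eqref{integraltva} to combinations of complete elliptic integrals whose moduli and pre-factors depend smoothly on $(x,y,u)$ throughout $\Af\setminus\Cont$, after which smoothness is an immediate consequence of the classical analytic properties of those elliptic integrals; this is precisely the programme the author announces and pursues in the subsequent sections of the paper.
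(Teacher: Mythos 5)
Your proposal follows essentially the same route as the paper: read off the smoothness of the second partials from the integral representations of Lemma \ref{teklemma2}, observe that off the contour the four roots $A,B,C,D$ are simple with the two non-endpoint roots separated from the integration interval, and then treat the $1/s$ factor in \eqref{integraltva} as a separate concern. Your handling of the generic case (the $\sin^2\theta$ substitution followed by differentiation under the integral sign) is in fact more careful than the paper's one-line assertion. One correction, though: the ``remaining case'' you single out as the main obstacle is empty. Off the contour one always has $r_0^2=\max(C,D)>0$, because $\max(C,D)=0$ forces $C=D=0$, and $C=D$ is equivalent to $1+e^y=e^x+e^u$ or $1+e^u=e^x+e^y$, i.e.\ to lying on the contour; concretely, on $\{e^x=1\}\setminus\Cont$ one sits in a chamber of type $(\cdot,-,+)$ or $(\cdot,+,-)$ where the lower limit is $|e^y-e^u|\neq 0$, and symmetrically on $\{e^y=e^u\}\setminus\Cont$. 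Hence the pole of $1/s$ never meets the closed integration interval on $\Af\setminus\Cont$, your first step already covers everything, and the rescaling $\sin\theta=r_0\tau/\sqrt{r_1^2-r_0^2}$ together with the factorisation $P_2=r_0 Q$ is unnecessary. Your final fallback --- deducing smoothness from the elliptic-integral form --- would also not be ``immediate'', since $Q_1$, $Q_3$ and $\alpha_2^2$ in Proposition \ref{andraderivator} have apparent poles precisely on the hypersurfaces $\{e^x=1\}$ and $\{e^y=e^u\}$, and removing them requires exactly the cancellation you were trying to avoid.
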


%%%%%%%%%%%%%%%%%%%%%%%%%%%%%%%%%%%%%%%%%%%%%%%%%%%%%%%%%%%%%%%%%%%%%%%%%%%%%%%%%%%%%%%%%%%%%%%%%%%%%%%%%%%%%%%%%%%%%%%%%%%%%%%%%%%%%%%%%%%%%%%%%%%%%%%%%%%%%%%%%%%
%%%%%%%%%%%%%%%%%%%%%%%%%%%%%%%%%%%%%%%%%%%%%%%%%%%%%%%%%%%%%%%%%%%%%%%%%%%%%%%%%%%%%%%%%%%%%%%%%%%%%%%%%%%%%%%%%%%%%%%%%%%%%%%%%%%%%%%%%%%%%%%%%%%%%%%%%%%%%%%%%%%
%%%%%%%%%%%%%%%%%%%%%%%%%%%%%%%%%%%%%%%%%%%%%%%%%%%%%%%%
%%%%%%%%%%%%%%%%%%%%%%%%%%%%%%%%%%%%%%%%%%%%%%%%%%%%%%%%          SECTION 4.2 ELLIPTIC INTEGRAL
%%%%%%%%%%%%%%%%%%%%%%%%%%%%%%%%%%%%%%%%%%%%%%%%%%%%%%%%
%%%%%%%%%%%%%%%%%%%%%%%%%%%%%%%%%%%%%%%%%%%%%%%%%%%%%%%%%%%%%%%%%%%%%%%%%%%%%%%%%%%%%%%%%%%%%%%%%%%%%%%%%%%%%%%%%%%%%%%%%%%%%%%%%%%%%%%%%%%%%%%%%%%%%%%%%%%%%%%%%%%
%%%%%%%%%%%%%%%%%%%%%%%%%%%%%%%%%%%%%%%%%%%%%%%%%%%%%%%%%%%%%%%%%%%%%%%%%%%%%%%%%%%%%%%%%%%%%%%%%%%%%%%%%%%%%%%%%%%%%%%%%%%%%%%%%%%%%%%%%%%%%%%%%%%%%%%%%%%%%%%%%%%

\subsection{Connections to elliptic integrals}

Elliptic integrals naturally comes up in many situations. For example when calculating the arc length of an ellipse (hence the name).
Lemma \ref{teklemma2} says that the second order derivatives of the Ronkin function of an affine linear polynomial in three variables are complete elliptic integrals.

\begin{definition}\label{elliptic}
An \rm elliptic integral \it is an integral of the form \\$\int R(s,\sqrt{P(s)})$ where $P$ is a polynomial of degree $3$ or $4$ with no multiple roots and $R$ is a rational function of $s$ and $\sqrt{P}$. It is always possible to express elliptic integrals as linear combinations in terms of elementary functions and the following three integrals:
\begin{eqnarray*}
\K(\varphi,k) & := & \int_0^{\varphi}\frac{d\theta}{\sqrt{1-k^2\sin^2\theta}}=\int_0^t\frac{ds}{\sqrt{(1-s^2)(1-k^2s^2)}},\\
\operatorname{E}(\varphi,k) & := & \int_0^{\varphi}\sqrt{1-k^2\sin^2\theta}d\theta = \int_0^t\sqrt{\frac{1-k^2s^2}{1-s^2}}ds,\\
\Pi(\varphi,\alpha^2,k) & := & \int_0^{\varphi}\frac{d\theta}{(1-\alpha^2\sin^2\theta)\sqrt{1-k^2\sin^2\theta}}=\\
& = & \int_0^t\frac{ds}{(1-\alpha^2s^2)\sqrt{(1-s^2)(1-k^2s^2)}}.
\end{eqnarray*}
The integrals above are said to be on normal, or Legendre, form.
If $\varphi=\frac{\pi}{2}$ we say that the integrals are \rm complete \it and we denote the three complete integrals on normal form by $\K(k)$, $\operatorname{E}(k)$ and $\Pi(\alpha^2,k)$, respectively.
\end{definition}

\begin{lemma}\label{teklemma3}
Assume $a>b>c>d$. Then 
\[\int_c^b \frac{s^jds}{\sqrt{(s-a)(s-b)(s-c)(s-d)}},\quad j=-1,0,1,\]
transforms into the following complete elliptic integrals on normal form:
\begin{eqnarray*}
&&g\K(k) \quad\qquad\qquad\qquad\quad\quad\quad\quad \textrm{  if } j=0 \\
&&dg\K(k)+g(c-d)\Pi(\alpha^2,k) \quad\quad\quad \!\textrm{if } j=1\\
&&\frac{g}{d}\K(k)+g(\frac{1}{c}-\frac{1}{d})\Pi(\alpha^2\frac{d}{c},k)  \quad\quad \textrm{if } j=-1,
\end{eqnarray*}
where 
\begin{displaymath}
k^2=\frac{(b-c)(a-d)}{(a-c)(b-d)},\qquad\alpha^2=\frac{b-c}{b-d},\qquad g=\frac{2}{\sqrt{(a-c)(b-d)}}.
\end{displaymath}
\end{lemma}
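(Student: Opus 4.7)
The plan is to reduce all three integrals to Legendre normal form by a single trigonometric substitution, namely
\[
\sin^2\theta \;=\; \frac{(b-d)(s-c)}{(b-c)(s-d)},
\]
which sends $s=c$ to $\theta=0$ and $s=b$ to $\theta=\pi/2$ (the latter because the right hand side reduces to $1$ there). Inverting this relation gives the clean expression
\[
s \;=\; d + \frac{c-d}{1-\alpha^2\sin^2\theta},\qquad \alpha^2=\frac{b-c}{b-d},
\]
which will be the key algebraic identity for the $j=\pm1$ cases.

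First I would carry out the case $j=0$. Differentiating the substitution and grinding through the algebra, one checks that
\[
\frac{ds}{\sqrt{(s-a)(s-b)(s-c)(s-d)}} \;=\; g\,\frac{d\theta}{\sqrt{1-k^2\sin^2\theta}},
\]
with $g$ and $k^2$ as stated in the lemma. The cleanest way to see this is to note that, in the range $c\le s\le b$, each of the four linear factors under the square root can be written in the substituted variable as a rational multiple of either $\sin^2\theta$, $\cos^2\theta$, $(1-\alpha^2\sin^2\theta)$, or $(1-k^2\sin^2\theta)$ (divided by $(1-\alpha^2\sin^2\theta)^2$), and three of those four factors cancel against $ds/d\theta$. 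Integrating from $\theta=0$ to $\pi/2$ then gives $g\K(k)$.

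For $j=1$, I substitute the identity $s = d + (c-d)/(1-\alpha^2\sin^2\theta)$ directly into the transformed integral:
\begin{align*}
\int_c^b \frac{s\,ds}{\sqrt{(s-a)(s-b)(s-c)(s-d)}}
&= g\int_0^{\pi/2}\frac{d+\tfrac{c-d}{1-\alpha^2\sin^2\theta}}{\sqrt{1-k^2\sin^2\theta}}\,d\theta \\
&= dg\K(k) + g(c-d)\Pi(\alpha^2,k).
\end{align*}
For $j=-1$ one has to invert $s$ and split it into two pieces matching the $\K$ and $\Pi$ templates. A short calculation with partial fractions in $\sin^2\theta$ gives
\[
\frac{1}{s} \;=\; \frac{1}{d} + \left(\frac{1}{c}-\frac{1}{d}\right)\frac{1}{1-\alpha^2\tfrac{d}{c}\sin^2\theta},
\]
after which integration yields the claimed expression involving $\Pi(\alpha^2 d/c,k)$.

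The only real obstacle is the bookkeeping for the differential identity in the $j=0$ step, where one must verify that $(s-a)(s-b)(s-c)(s-d)$ factors nicely under the substitution and that the modulus $k^2=(b-c)(a-d)/((a-c)(b-d))$ and prefactor $g=2/\sqrt{(a-c)(b-d)}$ emerge correctly; once that identity is established, the remaining two cases are essentially mechanical, since both reduce to linearity of the integral against the already-normalized measure $g\,d\theta/\sqrt{1-k^2\sin^2\theta}$.
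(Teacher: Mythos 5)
Your proof is correct. Note that the paper does not actually prove this lemma at all --- it simply cites Byrd and Friedman's handbook --- so your argument supplies a derivation where the paper supplies none; the substitution $\sin^2\theta=\frac{(b-d)(s-c)}{(b-c)(s-d)}$ is exactly the standard one tabulated there for the interval between the two middle roots. I checked the key identities: the inversion does give
\[
s=d+\frac{c-d}{1-\alpha^2\sin^2\theta},
\]
the four linear factors become
\[
a-s=\frac{(a-c)(1-k^2\sin^2\theta)}{1-\alpha^2\sin^2\theta},\quad
b-s=\frac{(b-c)\cos^2\theta}{1-\alpha^2\sin^2\theta},\quad
s-c=\frac{(c-d)\alpha^2\sin^2\theta}{1-\alpha^2\sin^2\theta},\quad
s-d=\frac{c-d}{1-\alpha^2\sin^2\theta},
\]
so that $ds/\sqrt{(a-s)(b-s)(s-c)(s-d)}=g\,d\theta/\sqrt{1-k^2\sin^2\theta}$ with the stated $g$ and $k^2$ (and $k^2<1$ follows from $(a-c)(b-d)-(b-c)(a-d)=(a-b)(c-d)>0$), and your partial-fraction identity for $1/s$ with modulus $\alpha^2 d/c$ is also correct. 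The only cosmetic caveat is that for $c<s<b$ one has $(s-a)(s-b)<0$, so the radicand in the lemma should be read as $(a-s)(b-s)(s-c)(s-d)$, which is what you implicitly do; and for $j=-1$ one needs $0\notin[c,b]$, which holds in the paper's application since $c,d$ are squares and $c>d\ge 0$.
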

\noindent
These results are well-known, see for example \cite{Byrd}.

Lemmas \ref{teklemma2} and \ref{teklemma3} make it possible to express the second order derivatives of $N_f$ in terms of complete elliptic integrals of the first and third kind. The only thing one has to do is to determine how $A,B,C,D$ in Lemma \ref{teklemma2} are ordered. In chamber $(+,+,+)$ we see that $A>B>C>D$ for example. 
Determining the order of $A,B,C$ and $D$ gives us the following expressions of the second order derivatives in the different chambers.
\begin{proposition}\label{andraderivator}
Let $f=1+z+w+t$. The second order derivatives of the Ronkin function $N_f$ can be expressed in terms of complete elliptic integrals of the first and third kind, as
\begin{align*}
\frac{\partial^2 N_f}{\partial x^2} &= \frac{2ge^{2x}}{\pi^2}\K(k), \\
\frac{\partial^2 N_f}{\partial x \partial y} &= \frac{-g}{2\pi^2}\left( Q_1\K(k) + Q_2\Pi(\alpha_1^2,k) +Q_3\Pi(\alpha_2^2,k)\right),
\end{align*}
where $k^2,\alpha_1^2\alpha_2^2,g^2,Q_1,Q_2$ and $Q_3$ are rational functions in $e^x,e^y$ and $e^u$ depend on what chamber $(x,y,u)$ lies in.
With
\begin{equation*}
\xi:=(1+e^x+e^y-e^u)(1+e^x-e^y+e^u)(1-e^x+e^y+e^u)(-1+e^x+e^y+e^u),
\end{equation*}
these functions will take the form according to the following:
\newline
In the chambers $(+,+,+)$ and $(+,-,-)$,
\begin{align*}
g&=\frac{1}{2\sqrt{e^{x+y+u}}},\quad
k^2=\frac{\xi}{16e^{x+y+u}},\\
Q_1&=2 \frac{e^y(e^{2x}+e^{2y}+e^{2u}-1-2e^{y+u})}{(e^y-e^u)},\quad
Q_2= (1-e^x+e^y-e^u)(1-e^x-e^y+e^u),\\
Q_3&=\frac{(e^u+e^y)(1-e^x+e^y-e^u)(1+e^x)(1-e^x-e^y+e^u)}{(e^u-e^y)(e^x-1)},\\
\alpha_1^2&=\frac{(1-e^x+e^y+e^u)(-1+e^x+e^y+e^u)}{4e^{y+u}},\quad
\alpha_2^2=\alpha_1^2\frac{(e^y-e^u)^2}{(1-e^x)^2}.
\end{align*}
In the chambers $(-,+,+)$ and $(-,-,-)$,
\begin{align*}
g&=\frac{2}{\sqrt{\xi}},\quad
k^2=\frac{16e^{x+y+u}}{\xi},\\
Q_1&=2 \frac{e^y(e^{2x}+e^{2y}+e^{2u}-1-2e^{y+u})}{(e^y-e^u)},\quad
Q_2= (1-e^x+e^y-e^u)(1-e^x-e^y+e^u),\\
Q_3&=\frac{(e^u+e^y)(1-e^x+e^y-e^u)(1+e^x)(1-e^x-e^y+e^u)}{(e^u-e^y)(e^x-1)},\\
\alpha_1^2&=\frac{4e^x}{(1+e^x+e^y-e^u)(1+e^x-e^y+e^u)},\quad
\alpha_2^2=\alpha_1^2\frac{(e^y-e^u)^2}{(1-e^x)^2}.
\end{align*}
In the chambers $(-,-,+)$ and $(-,+,-)$,
\begin{align*}
g&=\frac{1}{2\sqrt{e^{x+y+u}}},\quad
k^2=\frac{\xi}{16e^{x+y+u}},\\
Q_1&=2 \frac{e^x(e^{2x}+e^{2y}-e^{2u}+1-2e^{x})}{(e^x-1)},\quad
Q_2= -(1-e^x+e^y-e^u)(1-e^x-e^y+e^u),\\
Q_3&=-\frac{(e^u+e^y)(1-e^x+e^y-e^u)(1+e^x)(1-e^x-e^y+e^u)}{(e^u-e^y)(e^x-1)},\\
\alpha_1^2&=\frac{(1+e^x-e^y+e^u)(1+e^x+e^y-e^u)}{4e^{x}},\quad
\alpha_2^2=\alpha_1^2\frac{(e^x-1)^2}{(e^y-e^u)^2}.
\end{align*}
In the chambers $(+,-,+)$ and $(+,+,-)$,
\begin{align*}
g&=\frac{2}{\sqrt{\xi}},\quad
k^2=\frac{16e^{x+y+u}}{\xi},\\
Q_1&=-2 \frac{e^x(e^{2x}+e^{2y}-e^{2u}+1-2e^{x})}{(1-e^x)},\quad
Q_2= -(1-e^x+e^y-e^u)(1-e^x-e^y+e^u),\\
Q_3&=-\frac{(e^u+e^y)(1-e^x+e^y-e^u)(1+e^x)(1-e^x-e^y+e^u)}{(e^u-e^y)(e^x-1)},\\
\alpha_1^2&=\frac{4e^{y+u}}{(1-e^x+e^y+e^u)(-1+e^x+e^y+e^u)},\quad
\alpha_2^2=\alpha_1^2\frac{(1-e^x)^2}{(e^y-e^u)^2}.
\end{align*}
\end{proposition}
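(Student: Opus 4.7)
The plan is to apply Lemma \ref{teklemma2}, which already represents each second derivative as an integral of the form $\int_{r_0^2}^{r_1^2} R(s)\,ds/\sqrt{(s-A)(s-B)(s-C)(s-D)}$, and then reduce every such integral to Legendre normal form via Lemma \ref{teklemma3}. For $\partial^2 N_f/\partial x^2$ this is immediate since the numerator is constant and only the $j=0$ case of Lemma \ref{teklemma3} is needed. For $\partial^2 N_f/\partial x\partial y$ I would divide the numerator by $s$ and rewrite $(s^2+P_1s+P_2)/s = s+P_1+P_2/s$, so that the integral decomposes into one contribution each from the cases $j=1$, $j=0$, $j=-1$ of Lemma \ref{teklemma3}. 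This is also where the two modular parameters in the statement originate: the $j=1$ piece produces $\Pi(\alpha^2,k)$ with $\alpha^2=(b-c)/(b-d)$, while the $j=-1$ piece produces $\Pi(\alpha^2 d/c,k)$; the stated relation $\alpha_2^2=\alpha_1^2\cdot d/c$ simply records the endpoint swap between these two cases.

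The first real task is to fit Lemma \ref{teklemma3} to each chamber by determining the ordering of $A,B,C,D$. The integration endpoints are always two of these four quantities (by inspection of the table in Proposition \ref{derivata}: $r_0^2\in\{C,D\}$ and $r_1^2\in\{A,B\}$), and they must form the middle pair $c<b$ of Lemma \ref{teklemma3} because the polynomial under the square root has to be positive on $[r_0^2,r_1^2]$. The factorizations
\begin{align*}
A-C &= 4e^x, \qquad B-D = 4e^{y+u},\\
A-B &= (1+e^x-e^y-e^u)(1+e^x+e^y+e^u),\\
B-C &= (-1+e^x+e^y+e^u)(1-e^x+e^y+e^u),\\
A-D &= (1+e^x-e^y+e^u)(1+e^x+e^y-e^u),\\
C-D &= (1-e^x-e^y+e^u)(1-e^x+e^y-e^u)
\end{align*}
turn each defining inequality of a chamber into a sign statement about one of these differences, and this pins down the complete ordering case by case. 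For example, in chamber $(+,+,+)$ one reads off $A>B>C>D$ and integrates on $[C,B]$, so $(a,b,c,d)=(A,B,C,D)$; in chamber $(-,-,+)$ one gets $B>A>D>C$ and integrates on $[D,A]$, so $(a,b,c,d)=(B,A,D,C)$. The four orderings that arise in this way correspond exactly to the four formula blocks in the statement, and the two chambers sharing each block differ only by an involution that preserves the set $\{A,B,C,D\}$ and the integration interval.

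Once the ordering is fixed in a chamber the remainder is direct substitution: $(a-c)(b-d)$ evaluates to either $16e^{x+y+u}$ or $\xi$, accounting for the two values of $g$ and $k^2$; and the explicit values of $P_1,P_2$, combined with the $\K(k)$ contributions coming separately from the $j=1,0,-1$ integrals, are what merge into the single coefficient $Q_1$. The main obstacle is not conceptual but algebraic bookkeeping: one has to verify, in each of the four ordering regimes, that the $\K$-coefficients gathered from three different terms collapse to the compact factored form given for $Q_1$, and that the prefactors $g(c-d)$ and $g(1/c-1/d)$ of the two $\Pi$-terms simplify to the stated $Q_2$ and $Q_3$. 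The factorizations above are precisely what one needs for these simplifications, and the sign flips of $Q_1,Q_2,Q_3$ between pairs of blocks reflect the swap of $r_0^2$ and $r_1^2$ between the $\{A,B\}$-side and the $\{C,D\}$-side of the ordering.
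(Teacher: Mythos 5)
Your proposal is correct and follows exactly the route the paper takes (indeed, the paper gives no proof beyond the remark that Lemmas \ref{teklemma2} and \ref{teklemma3} apply once the ordering of $A,B,C,D$ is determined chamber by chamber). Your decomposition $(s^2+P_1s+P_2)/s=s+P_1+P_2/s$ into the $j=1,0,-1$ cases, the use of the factorizations of the pairwise differences to pin down the orderings, and the resulting identifications $Q_1=d+P_1+P_2/d$, $Q_2=c-d$, $Q_3=P_2(1/c-1/d)$ all check out against the stated formulas.
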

\noindent
Even though it appears that the mixed second order derivative of $N_f$ is singular at the points $(x,y,u)\in\R^3$ where $e^x=1$ or $e^y=e^u$ we saw that $P_2$ in Lemma \ref{teklemma2} vanishes at those points. This means that $Q_3 = 0$ and that $Q_1$ take the form $g(1+e^x)(1-e^x)$, and thus is not singular.
\newline
\newline
A priori we know that the Hessian matrix will be symmetric in every chamber. This gives us several relations between 
elliptic integrals of the first and third kind that as far as we know cannot be explained by the known relations that can be found in the literature.
There might thus be some interesting relations hidden in the following equation that we get by considering the case of the chamber $(+,+,+)$.

\begin{example}{\rm
For $a,b,c>0$ that satisfy the inequalities 
$1+a>b+c,\quad1+b>a+c,\quad1+c>a+b$, we get that
\begin{eqnarray*}
&&2\frac{(1+a+b-c)(a-b)c}{(a-c)(c-b)}\K(k) + (1-a+b-c)\Pi\left( \alpha_1^2,k\right)\\
&&-(1+a-b-c)\Pi\left( \alpha_2^2,k\right)
+\frac{(1+a)(b+c)(1-a+b-c)}{(1-a)(b-c)}\Pi\left( \alpha_3^2,k\right)\\
&&-\frac{(1+b)(a+c)(1+a-b-c)}{(1-b)(a-c)}\Pi\left( \alpha_4^2,k\right)\equiv0,
\end{eqnarray*}
with
\begin{eqnarray*}
&&k^2 = \frac{(1+a+b-c)(1+a-b+c)(1-a+b+c)(-1+a+b+c)}{16abc}, \\
&&\alpha_1^2 =\frac{(1-a+b+c)(-1+a+b+c)}{4bc},\\
&&\alpha_2^2 =\frac{(1+a-b+c)(-1+a+b+c)}{4ac},\\
&&\alpha_3^2 =\frac{(1-a+b+c)(-1+a+b+c)(b-c)^2}{4bc(1-a)^2},\\
&&\alpha_4^2 =\frac{(1+a-b+c)(-1+a+b+c)(a-c)^2}{4ac(1-b)^2}.
\end{eqnarray*}
}
\end{example}

%%%%%%%%%%%%%%%%%%%%%%%%%%%%%%%%%%%%%%%%%%%%%%%%%%%%%%%%%%%%%%%%%%%%%%%%%%%%%%%%%%%%%%%%%%%%%%%%%%%%%%%%%%%%%%%%%%%%%%%%%%%%%%%%%%%%%%%%%%%%%%%%%%%%%%%%%%%%%%%%%%%
%%%%%%%%%%%%%%%%%%%%%%%%%%%%%%%%%%%%%%%%%%%%%%%%%%%%%%%%%%%%%%%%%%%%%%%%%%%%%%%%%%%%%%%%%%%%%%%%%%%%%%%%%%%%%%%%%%%%%%%%%%%%%%%%%%%%%%%%%%%%%%%%%%%%%%%%%%%%%%%%%%%
%%%%%%%%%%%%%%%%%%%%%%%%%%%%%%%%%%%%%%%%%%%%%%%%%%%%%%%%
%%%%%%%%%%%%%%%%%%%%%%%%%%%%%%%%%%%%%%%%%%%%%%%%%%%%%%%%          SECTION 4.3 HYPERGEOMETRIC FUNCTIONS
%%%%%%%%%%%%%%%%%%%%%%%%%%%%%%%%%%%%%%%%%%%%%%%%%%%%%%%%
%%%%%%%%%%%%%%%%%%%%%%%%%%%%%%%%%%%%%%%%%%%%%%%%%%%%%%%%%%%%%%%%%%%%%%%%%%%%%%%%%%%%%%%%%%%%%%%%%%%%%%%%%%%%%%%%%%%%%%%%%%%%%%%%%%%%%%%%%%%%%%%%%%%%%%%%%%%%%%%%%%%
%%%%%%%%%%%%%%%%%%%%%%%%%%%%%%%%%%%%%%%%%%%%%%%%%%%%%%%%%%%%%%%%%%%%%%%%%%%%%%%%%%%%%%%%%%%%%%%%%%%%%%%%%%%%%%%%%%%%%%%%%%%%%%%%%%%%%%%%%%%%%%%%%%%%%%%%%%%%%%%%%%%

\subsection{Connections to hypergeometric functions}

Elliptic integrals are special cases of hypergeometric functions, which are 
very important in the field of special functions and mathematical physics.\\

Remember that the Gauss hypergeometric function, $\gauss$, and the Appell hypergeometric function in two variables, $\appellett$, is defined by the series 
\begin{equation}\label{gauss}
\gauss(a,b;c;z) = \sum_{n=0}^{\infty}\frac{(a)_n(b)_n}{(c)_n}\frac{z^n}{n!},
\end{equation}
where $(\lambda)_n = \Gamma(\lambda+n)/\Gamma(\lambda)$ is the Pochhammer symbol, and
\begin{equation*}
\appellett(a,b,b^{\prime};c;z;w)= \sum_{m,n=0}^{\infty}\frac{(a)_{m+n}(b)_m(b^{\prime})_n}{(c)_{m+n}}\frac{z^m w^n}{m!n!}.
\end{equation*}
Here the parameter $c$ is assumed not to be a non-positive integer.
The radius of convergence for $\gauss$ is $1$ unless $a$ or $b$ is a non-positive integer, in which case the radius of convergence is infinite, and the series $\appellett$ 
converge for $|z|<1$ and $|w|<1$,
It is well-known that the elliptic integrals are hypergeometric and that
\begin{equation}\label{gaussAhyp}
\K(k) = \frac{\pi}{2} \gauss(1/2,1/2;1;k^2)
\end{equation}
and
\begin{equation}\label{tredjeochappell}
\Pi(\alpha^2,k) = \frac{\pi}{2}{\rm F}_1(1/2;1,1/2;1;\alpha^2,k^2),
\end{equation}
see for example \cite{Exton}.

Gelfand, Kapranov and Zelevinsky revolutionized the theory of hypergeometric functions by considering a system of differential equations in several variables, see \cite{GKZ2}. The solutions to that specific system, called the GKZ-system, have certain homogeneities and they are defined to be $A$-hypergeometric or GKZ-hypergeometric functions.
By dehomogenizing these functions one can get $\gauss$ and Appell functions and many other generalizations of the Gauss hypergeometric function.\\
Following \cite{Lisa}, given a $(n\times N)$-matrix $A$ on the form
\begin{displaymath}
A =
\left( \begin{array}{cccc}
1 & 1 & \ldots & 1 \\
\alpha^1 & \alpha^2 & \ldots & \alpha^N
\end{array} \right)
\end{displaymath}
such that the maximal minors are relatively prime, we consider 
the $(N\times N-n)$-matrix $B$ such that $AB=0$.
Moreover, $B$ should be such that the rows in $B$ span $\Z^{N-n}$ and such that it is on the form $(B^{\prime}, E_m)^{tr}$ where 
$E_m$ is the unit $(N-n\times N-n)$-matrix. Let $\C^A$ be the vector space consisting of vectors $(a_{\alpha})_{\alpha\in A}$ and write $a=(a_1,\ldots,a_N)$.
Let $b^1,\ldots,b^{N-n}$ be the columns in $B$.
The differential operators $\Box_{i}$ and $\mathcal{E}_i$ on $\C^A$ are defined by
\begin{equation}
\Box_{i} = \prod_{j:b_j^i>0}\left( \partial/\partial a_j\right)^{b_j^i} - \prod_{j:b_j^i<0}\left( \partial/\partial a_j\right)^{-b_j^i}
\end{equation}
and
\begin{equation}
\mathcal{E}_i = \sum_{j=1}^{N}\alpha_i^{j}a_j(\partial/\partial a_j),\quad i=1,\ldots,n,
\end{equation}
where $\alpha^{i}_j$ is the entry in $A$ on row $i$ and column $j$.

\begin{definition}
For every complex vector $\gamma=(\gamma_1,\ldots,\gamma_n)$, we define the GKZ-system with parameters $\gamma$ as the following system of linear differential equations on functions $\Phi$ on $\C^A$.
\begin{equation}\label{gkzsys}
\Box_i\Phi(a)=0,\qquad \mathcal{E}_j\Phi=\gamma_j\Phi,\quad i=1,\ldots, N-n,\quad j=1,\ldots,n.
\end{equation}
\end{definition}
\noindent
The holomorphic solutions to the system (\ref{gkzsys}) are called $A$-hypergeometric functions.
A formal explicit solution to the system (\ref{gkzsys}) is given by 
\begin{equation}\label{Ahyp}
\Phi(a)=\sum_{k\in\Z^{N-n}}\frac{a^{\gamma + \left< B,k \right>}}{\prod_{j=1}^{n}\Gamma \left( \gamma_j+\left< B_j,k \right> +1\right) k!},
\end{equation}
where $B_j$ denotes the rows in the matrix $B$ and $\gamma_{n+1},\ldots,\gamma_N=0$.\\
\smallskip
Remember the formula 
\begin{equation}\label{gammsin}
\Gamma(s)\Gamma(1-s) = \pi/\sin(\pi s).
\end{equation}
In the generic case (noninteger parameters) the formula (\ref{gammsin}) directly gives us the following formula making it possible to move the gamma functions in (\ref{Ahyp}) from the 
denominator to the numerator, i.e.,
\begin{equation}\label{gammarel}
\frac{\Gamma(s+n)}{\Gamma(s)}=(-1)^n\frac{\Gamma(1-s)}{\Gamma(1-n-s)}.
\end{equation}
\noindent
We can now relate the functions $\gauss$ and $\Phi$ by
\begin{align*}
\gauss(a,b;c;z) &= \sum_{n=0}^{\infty}\frac{(a)_n(b)_n}{(c)_n}\frac{z^n}{n!}=
\sum_{n=0}^{\infty}\frac{\Gamma(a+n)\Gamma(b+n)\Gamma(c)}{\Gamma(c+n)\Gamma(a)\Gamma(b)}\frac{z^n}{n!}\\
&=\sum_{n=0}^{\infty}\frac{\Gamma(1-a)\Gamma(1-b)\Gamma(c)}{\Gamma(1-n-a)\Gamma(1-n-b)\Gamma(c+n)}\frac{z^n}{n!} \\
&=\Gamma(1-a)\Gamma(1-b)\Gamma(c)\Phi(1,1,1,z),
\end{align*}
with
\begin{equation*}
\gamma = (-a,-b,c-1) \quad \text{and}\quad B=(-1,-1,1,1)^{tr}.
\end{equation*}
 The above equation together with (\ref{gaussAhyp}) make it possible for us to express the complete elliptic integral of the first kind as an 
$A$-hypergeometric function, i.e.,
\begin{align}\label{forstaphi}
\K(k) = \frac{\pi^2}{2}\Phi(1,1,1,z),
\end{align}
with
\begin{equation*}
\gamma = (-1/2,-1/2,0) \quad \text{and}\quad B=(-1,-1,1,1)^{tr}.
\end{equation*}

We can do the same procedure for the Appell hypergeometric function $\appellett$ but we have to modify the function $\Phi$ a bit because we have a non generic parameter in the numerator. We therefore introduce the series $\tilde{\Phi}$ defined by
\begin{equation}
\tilde{\Phi}(a) = \sum_{k\in\Z^{N-n}}\frac{(-1)^{\left< B_1,k \right>}\Gamma\left(-\gamma_1-\left< B_1,k \right>\right) a^{\gamma + \left< B,k \right>}}{\prod_{j=2}^{n}\Gamma \left( \gamma_j+\left< B_j,k \right> +1\right) k!}.
\end{equation}
\noindent
The series $\tilde{\Phi}$ should be regarded as a meromorphic function with removable singularities (the $k!$ in the denominator take care of the possible singularities of the gamma function in the numerator). Note that for generic parameters we can use (\ref{gammarel}) to move the gamma function in the numerator to the denominator and we get
\begin{equation*}
\tilde{\Phi} = \Gamma(1+\gamma_1)\Gamma(-\gamma_1)\Phi.
\end{equation*}
\noindent
We can now do the same reasoning as in the case of the Gauss hypergeometric function and use (\ref{tredjeochappell}) to get
\begin{align}\label{tredjehyper}
\Pi(\alpha^2,k) = \frac{\pi^2}{2}\tilde{\Phi}(1,1,1,1,\alpha^2,k^2),
\end{align}
with
\begin{equation*}
\gamma = (-1,0,-1/2,-1/2) \quad \text{and}\quad B = 
\left( \begin{array}{cccccc}
-1 & 1 & 0 & -1 & 1 & 0 \\
0 & 1 & -1 & -1 & 0 & 1
\end{array} \right)^{tr}.
\end{equation*}

\noindent
If we combine (\ref{forstaphi}) and (\ref{tredjehyper}) with Proposition \ref{andraderivator} we get an expression of the second order derivatives of the Ronkin function of an affine linear polynomial in three variables in terms of $A$-hypergeometric functions.

\begin{proposition}
Let $f=1+z+w+t$ and set
\begin{equation*}
 \gamma_1 = (-1/2,-1/2,0),\quad \gamma_2 = (-1,0,-1/2,-1/2),
\end{equation*}
\begin{equation*}
\quad B=(-1,-1,1,1)^{tr},\quad B_2= 
\left( \begin{array}{cccccc}
-1 & 1 & 0 & -1 & 1 & 0 \\
0 & 1 & -1 & -1 & 0 & 1
\end{array} \right)^{tr}.
\end{equation*}
Then the second order derivatives of the Ronkin function $N_f$ can be expressed in terms of $A$-hypergeometric functions in the following way.
\begin{align*}
\frac{\partial^2 N_f}{\partial x^2}  & = g e^{2x} \Phi(1,1,1,k^2), \\
\frac{\partial^2 N_f}{\partial x \partial y}  & = \frac{g}{4}( Q_1 \Phi(1,1,1,k^2) + Q_2 \tilde{\Phi}(1,1,1,1,\alpha_1^2,k^2)
 + Q_3 \tilde{\Phi}(1,1,1,1,\alpha_2^2,k^2) )
\end{align*}
with parameters $\gamma_1,\gamma_2$ and matrices $B_1,B_2$. The functions and parameters $k^2$, $\alpha_1^2, \alpha_2^2, g^2, Q_1, Q_2$ and $Q_3$ are defined in Proposition \ref{andraderivator}.
\end{proposition}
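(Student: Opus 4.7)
The plan is straightforward: this proposition follows immediately by substituting the hypergeometric representations (\ref{forstaphi}) and (\ref{tredjehyper}) into the elliptic-integral formulas of Proposition \ref{andraderivator}. The serious work has already been carried out in deriving those two identities; what remains is bookkeeping.

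First I would recall that Proposition \ref{andraderivator} gives
\[
\frac{\partial^2 N_f}{\partial x^2} = \frac{2ge^{2x}}{\pi^2}\K(k), \qquad
\frac{\partial^2 N_f}{\partial x\partial y} = -\frac{g}{2\pi^2}\bigl(Q_1\K(k)+Q_2\Pi(\alpha_1^2,k)+Q_3\Pi(\alpha_2^2,k)\bigr),
\]
with chamber-dependent $g$, $k$, $\alpha_j$, $Q_j$ already listed explicitly there. Both formulas are valid off the contour, in each of the eight chambers.

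Next I would invoke (\ref{forstaphi}), namely $\K(k) = (\pi^2/2)\,\Phi(1,1,1,k^2)$ with parameter vector $\gamma_1=(-1/2,-1/2,0)$ and exponent matrix $B=(-1,-1,1,1)^{tr}$, together with (\ref{tredjehyper}), namely $\Pi(\alpha^2,k) = (\pi^2/2)\,\tilde{\Phi}(1,1,1,1,\alpha^2,k^2)$ with $\gamma_2=(-1,0,-1/2,-1/2)$ and $B_2$ as stated. A single substitution turns the $1/\pi^2$ into $1/2$ (or into $1/4$ for the mixed derivative), producing exactly $g e^{2x}\,\Phi(1,1,1,k^2)$ for the pure second derivative and the linear combination $(g/4)\bigl(Q_1\Phi+Q_2\tilde\Phi_1+Q_3\tilde\Phi_2\bigr)$ for the mixed one. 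The overall sign in the mixed derivative is absorbed into the chamber-dependent sign conventions of the $Q_j$ (compare the four cases in Proposition \ref{andraderivator}).

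The one point that needs verification — and which has essentially been checked already, just above the proposition — is that the stated matrices $B$, $B_2$ and parameter vectors $\gamma_1$, $\gamma_2$ really do reproduce the classical series $\gauss(1/2,1/2;1;k^2)$ and $\appellett(1/2;1,1/2;1;\alpha^2,k^2)$ via the GKZ formula (\ref{Ahyp}). This is a direct match of exponents column by column, using the Gamma-function identity (\ref{gammarel}) in the generic slots and the regularization $\tilde\Phi = \Gamma(1+\gamma_1)\Gamma(-\gamma_1)\,\Phi$ in the non-generic slot of the Appell series (needed because $c=1$ is an integer). There is no real mathematical obstacle here; the proposition is essentially a dictionary entry translating between the Legendre-form elliptic integrals of Proposition \ref{andraderivator} and the GKZ-encoding (\ref{Ahyp}), and the substitution is purely formal.
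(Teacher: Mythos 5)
Your proposal coincides with the paper's own justification: the paper gives no separate proof of this proposition beyond the single sentence preceding it, which performs exactly the substitution of (\ref{forstaphi}) and (\ref{tredjehyper}) into Proposition \ref{andraderivator} that you describe. One small caveat: the substitution literally yields a prefactor $\frac{-g}{2\pi^2}\cdot\frac{\pi^2}{2}=-\frac{g}{4}$ for the mixed derivative, whereas the statement shows $\frac{g}{4}$ with the $Q_j$ unchanged from Proposition \ref{andraderivator}; this sign discrepancy sits in the paper's statement rather than in your argument, so it cannot really be ``absorbed into the $Q_j$'' as you suggest.
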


%%%%%%%%%%%%%%%%%%%%%%%%%%%%%%%%%%%%%%%%%%%%%%%%%%%%%%%%%%%%%%%%%%%%%%%%%%%%%%%%%%%%%%%%%%%%%%%%%%%%%%%%%%%%%%%%%%%%%%%%%%%%%%%%%%%%%%%%%%%%%%%%%%%%%%%%%%%%%%%%%%%
%%%%%%%%%%%%%%%%%%%%%%%%%%%%%%%%%%%%%%%%%%%%%%%%%%%%%%%%%%%%%%%%%%%%%%%%%%%%%%%%%%%%%%%%%%%%%%%%%%%%%%%%%%%%%%%%%%%%%%%%%%%%%%%%%%%%%%%%%%%%%%%%%%%%%%%%%%%%%%%%%%%
%%%%%%%%%%%%%%%%%%%%%%%%%%%%%%%%%%%%%%%%%%%%%%%%%%%%%%%%
%%%%%%%%%%%%%%%%%%%%%%%%%%%%%%%%%%%%%%%%%%%%%%%%%%%%%%%%          SECTION 5 THE LOGARHITMIC MAHLER MEASURE
%%%%%%%%%%%%%%%%%%%%%%%%%%%%%%%%%%%%%%%%%%%%%%%%%%%%%%%%
%%%%%%%%%%%%%%%%%%%%%%%%%%%%%%%%%%%%%%%%%%%%%%%%%%%%%%%%%%%%%%%%%%%%%%%%%%%%%%%%%%%%%%%%%%%%%%%%%%%%%%%%%%%%%%%%%%%%%%%%%%%%%%%%%%%%%%%%%%%%%%%%%%%%%%%%%%%%%%%%%%%
%%%%%%%%%%%%%%%%%%%%%%%%%%%%%%%%%%%%%%%%%%%%%%%%%%%%%%%%%%%%%%%%%%%%%%%%%%%%%%%%%%%%%%%%%%%%%%%%%%%%%%%%%%%%%%%%%%%%%%%%%%%%%%%%%%%%%%%%%%%%%%%%%%%%%%%%%%%%%%%%%%%

\section{The logarithmic Mahler measure}\label{Mahler}

Closely related to the Ronkin function is the Mahler measure that was introduced by Mahler in \cite{Mahler}.
The Mahler measure of a polynomial is a real number and the logarithm of that number is called the logarithmic mahler measure.
\begin{definition}
Let $f$ be a polynomial in $n$ variables with real or complex coefficients. The number
\begin{equation*}
\m(f) = 
\left\{ \begin{array}{ll}
\left( \frac{1}{2\pi i}\right)^n\int_{\Log^{-1}(0)}\log|f(z)|\frac{dz}{z}& \text{if}\quad f \not\equiv 0\\
0& \text{if}\quad f \equiv 0
\end{array} \right.
\end{equation*}
is called the {\rm logarithmic Mahler measure} of $f$.
\end{definition}

\noindent
We see that the logarithmic Mahler measure is the Ronkin function evaluated in the origin. On the other hand if 
$f(z)=\sum_{\alpha\in A}a_{\alpha}z^{\alpha}$, then 
\begin{equation*}
N_{f}(x) = \m\left( \sum_{\alpha\in A}a_{\alpha}e^{\left< \alpha, x \right>}z^{\alpha}\right).
\end{equation*}
In particular, if $f(z_1,\ldots,z_n)=1+z_1+\ldots+z_n$ we get that 
\begin{equation}\label{RonMah}
N_f(x_1,\ldots,x_n)=\m(1+e^{x_1}z_1+\ldots+e^{x_n}z_n).
\end{equation}
Thus if one can give an explicit expression of the Mahler measure of $f=1+a_1z_1+\ldots+a_nz_n$ for $a_j>0$ one also has an explicit expression of the Ronkin function of 
$f=1+z_1+\ldots+z_n$ and vice versa.

One of the first explicit formulas for the Mahler measure of a two variable polynomal was proved by Smyth, \cite{Smyth}, and takes the following form in terms of the Ronkin function.

\begin{theorem}{\rm (Smyth)}
Let $f=1+z+w$. Then
\begin{equation*}
N_f(0,0) = \frac{3\sqrt{3}}{4\pi}\operatorname{L}(\chi_{-3},2),
\end{equation*}
where 
\begin{equation*}
\operatorname{L}(\chi_{-3},s)=\sum_{k=1}^{\infty}\frac{\chi_{-3}(k)}{k^s} \quad\text{and}\quad
\chi_{-3}(k)=
\left\{ \begin{array}{ll}
1& \text{if}\quad k\equiv 1 \mod 3\\
-1& \text{if}\quad k\equiv -1 \mod 3\\
0& \text{if}\quad k\equiv 0 \mod 3
\end{array} \right..
\end{equation*}
\end{theorem}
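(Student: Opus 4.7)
The plan is to evaluate $N_f(0,0) = \m(f)$ by a two-stage reduction: first I use Jensen's formula to collapse the double integral to a one-dimensional one, then I recognise the resulting integral as a value of Clausen's function, and finally identify that value with the $L$-series in the statement.

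First I apply Jensen's formula to the inner integral in $\theta$, writing the integrand as $\log|(1+e^{i\phi})+e^{i\theta}|$ and using the classical identity
\begin{equation*}
\frac{1}{2\pi}\int_0^{2\pi}\log|a+e^{i\theta}|\,d\theta = \log^+|a|.
\end{equation*}
This reduces the problem to $\m(f) = \frac{1}{2\pi}\int_0^{2\pi}\log^+|1+e^{i\phi}|\,d\phi$. Since $|1+e^{i\phi}|=2|\cos(\phi/2)|$, the condition $|1+e^{i\phi}|\geq 1$ reads $|\phi|\leq 2\pi/3$ modulo $2\pi$, and after the substitution $\psi = \phi/2$ together with the evenness of the integrand, I arrive at
\begin{equation*}
\m(f) = \frac{2}{\pi}\int_0^{\pi/3}\log(2\cos\psi)\,d\psi.
\end{equation*}

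Next, the substitution $u = \pi/2 - \psi$ converts this into $\frac{2}{\pi}\int_{\pi/6}^{\pi/2}\log(2\sin u)\,du$. Inserting the Fourier expansion $\log(2\sin u) = -\sum_{k\geq 1}\cos(2ku)/k$ (valid on $(0,\pi)$) and integrating term by term yields
\begin{equation*}
\int_0^\theta \log(2\sin u)\,du = -\tfrac{1}{2}\mathrm{Cl}_2(2\theta),
\end{equation*}
where $\mathrm{Cl}_2(\theta)=\sum_{k\geq 1}\sin(k\theta)/k^2$ is Clausen's function. Combined with $\mathrm{Cl}_2(\pi)=0$, this immediately gives $\m(f) = \frac{1}{\pi}\mathrm{Cl}_2(\pi/3)$.

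The remaining step is to identify $\mathrm{Cl}_2(\pi/3)$ with the $L$-value, and is the only genuinely delicate point. The sequence $\sin(k\pi/3)/(\sqrt{3}/2)$ has period $6$ with pattern $(1,1,0,-1,-1,0)$, whereas the Dirichlet character $\chi_{-3}(k)$ has period $3$ with pattern $(1,-1,0)$; a direct comparison shows that for $k$ coprime to $3$ the two sequences differ precisely by the sign $(-1)^{k+1}$. Substituting into the Clausen series gives
\begin{equation*}
\mathrm{Cl}_2(\pi/3) = \frac{\sqrt{3}}{2}\sum_{k=1}^\infty \frac{(-1)^{k+1}\chi_{-3}(k)}{k^2},
\end{equation*}
and splitting the sum into odd and even $k$ while using the multiplicativity relation $\chi_{-3}(2m)=-\chi_{-3}(m)$ gives $\sum_{k\text{ even}}\chi_{-3}(k)/k^2 = -\tfrac{1}{4}L(\chi_{-3},2)$ and hence $\sum_{k\text{ odd}}\chi_{-3}(k)/k^2 = \tfrac{5}{4}L(\chi_{-3},2)$, which assemble into $\mathrm{Cl}_2(\pi/3) = \tfrac{3\sqrt{3}}{4}L(\chi_{-3},2)$ and yield the stated formula. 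The main obstacle is precisely this final bookkeeping: $\chi_{-3}$ lives naturally modulo $3$ but $\mathrm{Cl}_2(\pi/3)$ has natural period $6$, so one must carefully split by the parity of $k$ and exploit the multiplicativity of $\chi_{-3}$ to match the two series.
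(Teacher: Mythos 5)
Your argument is correct. Note that the paper does not prove this statement at all: it is quoted as a known theorem of Smyth with a reference to \cite{Smyth}, so there is no internal proof to compare against. Your derivation is the standard one and is complete: Jensen's formula applied to the inner integral gives $\m(f)=\frac{1}{2\pi}\int_0^{2\pi}\log^+|1+e^{i\phi}|\,d\phi$, the reduction to $\frac{1}{\pi}\mathrm{Cl}_2(\pi/3)$ via the Fourier series of $\log(2\sin u)$ is valid (term-by-term integration is justified since the partial sums of $\sum\cos(2ku)/k$ are boundedly convergent on compact subsets of $(0,\pi)$ and the endpoint $\pi/6$ is interior), and the final bookkeeping checks out: comparing the period-$6$ pattern $(1,1,0,-1,-1,0)$ of $\sin(k\pi/3)/(\sqrt3/2)$ with $(-1)^{k+1}\chi_{-3}(k)$, and using $\chi_{-3}(2)=-1$ to get $\sum_{k\ \mathrm{even}}\chi_{-3}(k)k^{-2}=-\tfrac14 L(\chi_{-3},2)$, yields $\sum_k(-1)^{k+1}\chi_{-3}(k)k^{-2}=\tfrac32 L(\chi_{-3},2)$ and hence $\mathrm{Cl}_2(\pi/3)=\tfrac{3\sqrt3}{4}L(\chi_{-3},2)$, which gives the stated value $\tfrac{3\sqrt3}{4\pi}L(\chi_{-3},2)\approx 0.32307$, in agreement with the known Mahler measure of $1+z+w$.
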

\noindent
Almost 20 years later Maillot generalized the theorem of Smyth by giving an explicit expression for the Ronkin function at every point in $\R^2$, see \cite{Maillot}. The expression involves the so-called Block-Wigner dilogarithm, denoted by $D(z)$ and defined as

\begin{equation*}
D(z)=\operatorname{Im}(\Li_2(z)+\log|z|\log(1-z))
\end{equation*}
for $z\in\mathbb{C}^n\setminus \{0,1\}$. Here $\Li_2(z)$ is the dilogarithm of $z$.

\begin{theorem}{\rm (Maillot)}
Let $f = 1+z+w$. Then
\begin{equation*}
N_f(x,y) = 
\left\{ \begin{array}{lll}
\frac{\alpha}{\pi}x+\frac{\beta}{\pi}y + \frac{1}{\pi}D(e^{x+i\beta}) & \text{if } (x,y)\in\Af\\
 \\
\max\{ 0, x, y \} & \text{otherwise}
\end{array} \right.,
\end{equation*}
where $\alpha$ and $\beta$ are defined in Figure \ref{Triangel2} below.
\end{theorem}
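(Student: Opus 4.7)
\emph{Proof plan.} Outside the amoeba the claim is essentially formal: $N_f$ is convex on $\R^2$ and affine on each of the three components of $\R^2\setminus\Af$, with gradient in $\Delta_f\cap\Z^2=\{(0,0),(1,0),(0,1)\}$. Matching each vertex to its component by asymptotics ($N_f\to 0$ as $x,y\to-\infty$, $N_f\sim x$ as $x\to+\infty$ with $y$ bounded, etc.) forces $N_f\in\{0,x,y\}$ in the respective components, and convexity assembles these into $N_f=\max(0,x,y)$.

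For the interior, the plan is to evaluate
\[
N_f(x,y)=\frac{1}{(2\pi)^2}\int_0^{2\pi}\!\!\int_0^{2\pi}\log\bigl|1+e^{x+i\theta_1}+e^{y+i\theta_2}\bigr|\,d\theta_1\,d\theta_2
\]
iteratively. Jensen's formula applied to the inner integral in $z=e^{x+i\theta_1}$ over the circle of radius $e^x$ collapses it to $\max\bigl(\log|1+e^{y+i\theta_2}|,\,x\bigr)$. Let $\alpha,\beta$ be the angles opposite the sides $e^x,e^y$ in the Euclidean triangle with side lengths $1,e^x,e^y$; by Proposition~\ref{FPT} these are well-defined precisely when $(x,y)\in\Af$. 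The law of cosines then shows $|1+e^{y+i\theta_2}|>e^x$ iff $|\theta_2|<\pi-\alpha$, so the complementary arc of length $2\alpha$ contributes $(\alpha/\pi)x$ to $N_f$, reducing the task to computing $I:=\frac{1}{2\pi}\int_{-(\pi-\alpha)}^{\pi-\alpha}\log|1+e^{y+i\theta}|\,d\theta$.

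For $I$, I use the identity $\frac{d}{d\theta}\Li_2(-e^{y+i\theta})=-i\log(1+e^{y+i\theta})$ to obtain the complex antiderivative $i\Li_2(-e^{y+i\theta})$, then take real parts and rewrite the result via $D(z)=\Im\Li_2(z)+\log|z|\arg(1-z)$ with $z=-e^{y+i\theta}$. This gives the real antiderivative $-D(-e^{y+i\theta})+y\arg(1+e^{y+i\theta})$. At the endpoints I compute $-e^{y\pm i(\pi-\alpha)}=e^{y\mp i\alpha}$, and the triangle with vertices $0,\,1,\,1+e^{y+i(\pi-\alpha)}$ (having side lengths $1,e^y,e^x$) has angle $\beta$ at the origin, so $\arg(1+e^{y+i(\pi-\alpha)})=\beta$. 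Combining this with the symmetry $D(\bar w)=-D(w)$ yields $I=(1/\pi)D(e^{y+i\alpha})+(\beta/\pi)y$, and therefore
\[
N_f(x,y)=\frac{\alpha}{\pi}x+\frac{\beta}{\pi}y+\frac{1}{\pi}D(e^{y+i\alpha}).
\]
To recover Maillot's formulation with $D(e^{x+i\beta})$, I invoke the triangle identity $e^{x+i\beta}=1-e^{y-i\alpha}$ (placing the triangle in $\C$ with the unit side along $[0,1]$) together with the Bloch--Wigner symmetries $D(1-w)=-D(w)$ and $D(\bar w)=-D(w)$, which chain together to give $D(e^{x+i\beta})=D(e^{y+i\alpha})$.

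The main obstacle will be the central dilogarithm bookkeeping: producing the single-valued function $D$ from the multi-valued $\Li_2$ antiderivative by correctly tracking the $\log|z|\arg(1-z)$ correction term, and translating the integration limits into the triangle angles $\alpha,\beta$. Once this is in place, the conjugation and reflection symmetries of $D$ close everything out, and continuity at $\partial\Af$ (where $\alpha,\beta$ degenerate to $0,\pi/2$, or $\pi$ and $D$ vanishes at real arguments) confirms that the inside and outside formulas match along the boundary.
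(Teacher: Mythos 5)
The paper offers no proof of this statement: it is quoted verbatim from Maillot \cite{Maillot}, so there is nothing internal to compare your argument against. Judged on its own, your derivation is correct and complete in outline. The exterior case is the standard order-map argument; the interior computation checks out at every step: Jensen's formula collapses the inner integral to $\max\bigl(\log|1+e^{y+i\theta_2}|,x\bigr)$; the law of cosines correctly identifies the transition angle $\pi-\alpha$ (with $\alpha$ opposite the side $e^x$, matching Figure \ref{Triangel2} and the first-derivative computation $\partial N_f/\partial x=\alpha/\pi$ in the paper's own example, of which your formula is a refinement); the antiderivative $-D(-e^{y+i\theta})+y\arg(1+e^{y+i\theta})$ is right, and the arc $\{-e^{y+i\theta}:|\theta|\le\pi-\alpha\}=\{e^{y+i\psi}:\alpha\le\psi\le 2\pi-\alpha\}$ avoids the cut $[1,\infty)$ whenever $\alpha>0$, so the branch bookkeeping you flag as the main obstacle is in fact unproblematic strictly inside $\Af$. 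The endpoint evaluation gives $2D(e^{y+i\alpha})+2\beta y$, and the chain $D(e^{x+i\beta})=D(1-e^{y-i\alpha})=-D(e^{y-i\alpha})=D(e^{y+i\alpha})$ via the reflection and conjugation symmetries of the Bloch--Wigner function is valid, so the two forms of the answer agree. As a sanity check, at the origin your formula gives $\pi^{-1}D(e^{i\pi/3})=\tfrac{3\sqrt3}{4\pi}\operatorname{L}(\chi_{-3},2)$, recovering Smyth's value \cite{Smyth}. Two cosmetic points: the well-definedness of the triangle $(1,e^x,e^y)$ for $(x,y)\in\Af$ is really the triangle-inequality description of the hyperplane amoeba rather than Proposition \ref{FPT} as stated (though both are due to Forsberg--Passare--Tsikh), and in the boundary matching you should note that $\alpha,\beta$ degenerate to values in $\{0,\pi\}$ (not $\pi/2$) on the three boundary arcs, which is what makes the affine parts $0$, $x$, $y$ come out.
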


\noindent
Interestingly, the partial derivatives of the Ronkin function are very easy to describe in this case.
\begin{example}{\rm
Let $f(z,w)=1+z+w$. Then
\begin{equation*}
\ddx N_f  = \frac{\alpha}{\pi}, \qquad \ddy N_f = \frac{\beta}{\pi},
\end{equation*} 
where $\alpha$ and $\beta$ are described in Figure \ref{Triangel2}.

\begin{figure}[h]
\begin{center}
\includegraphics[height=3cm]{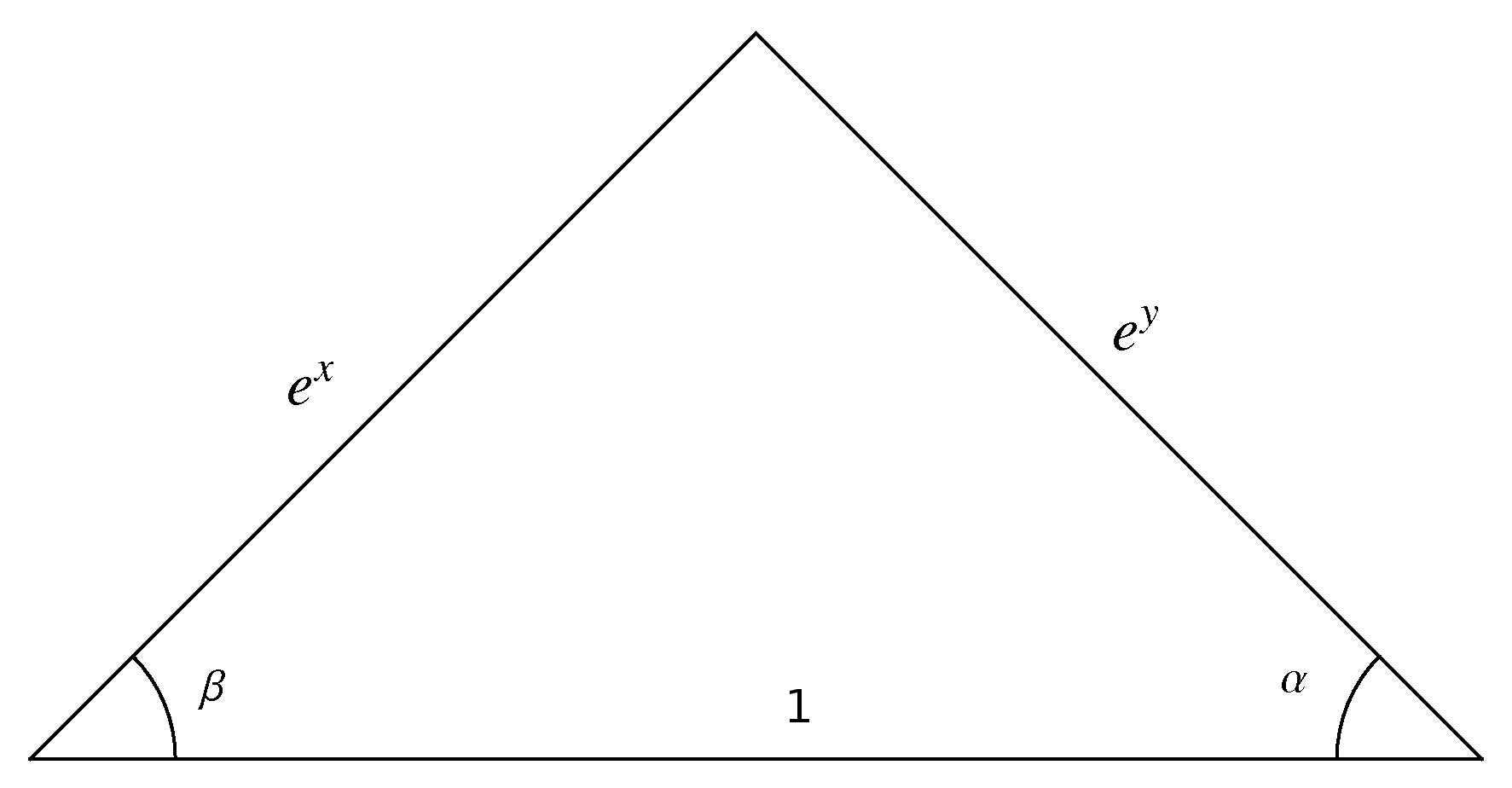}
\caption{}\label{Triangel2}
\label{Triangel}
\end{center}
\end{figure}

To see this, note that
a differentiation under the integral sign gives
\begin{eqnarray*}
\ddx N_f(x,y) &=& \ddx \left( \frac{1}{2\pi i} \right)^2\int_{\Log^{-1}(x,y)}\log|1+z+w|\frac{dz}{z}\frac{dw}{w} \\
&=& \left( \frac{1}{2\pi i} \right)^2\int_{\Log^{-1}(x,y)}\frac{dz}{(1+z+w)}\frac{dw}{w} \\
&=& \left( \frac{1}{2\pi i} \right) \int_{|w|=e^y} \left( \left( \frac{1}{2\pi i} \right) \int_{|z|=e^x}\frac{dz}{z-(-1-w)}\right) \frac{dw}{w}.
\end{eqnarray*}
Now, the inner integral is equal to $1$ when $|z|=e^x<|1+w|$ and equal to $0$ when $|z|=e^x>|1+w|$. Since $dw/w$ is the volume measure on the torus $|w|=e^y$ we get that 
$N_f$ equals the ratio 
\begin{equation*}
\frac{\lambda \left( \{ \phi\in[0,2\pi ]; e^x < |1+e^{y+i\phi}| \} \right)}{\lambda \left( [ 0,2\pi ]\right)},
\end{equation*}
where $\lambda$ is the Lebesgue measure, and this expression is obviously equal to $\alpha/\pi$. The second part is proved analogously.}
\end{example}

In \cite{Smyth} Smyth  proved a formula for the affine linear case in the three variables case but this only gives 
the values of the Ronkin function at points where four of the chambers meet.
\begin{theorem}{\rm (Smyth)}\label{smy}
\begin{displaymath}
m(1+z+aw+at)  = \left\{ \begin{array}{ll}
\frac{2}{\pi^2}\left( \operatorname{Li}_3(a)-\operatorname{Li}_3(-a) \right) & \textrm{if } a\leq 1\\
\log(a) + \frac{2}{\pi^2}\left( \operatorname{Li}_3(a^{-1})-\operatorname{Li}_3(-a^{-1}) \right)& \textrm{if }a \geq 1
\end{array} \right.,
\end{displaymath}
where $\operatorname{Li_3}$ is the trilogarithm defined as
\begin{equation*}
\operatorname{Li_3}(z) = \sum_{k=1}^{\infty}\frac{z^k}{k^3}.
\end{equation*}
\end{theorem}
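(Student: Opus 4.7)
The plan is to compute $m(1+z+aw+at)$ directly from its definition by reducing the triple integral to a one-dimensional one via two applications of Jensen's formula, and then identifying the resulting expression as a trilogarithm series. I would treat the case $a \leq 1$ first; the case $a \geq 1$ reduces to it. Starting from
$$m(1+z+aw+at) = \frac{1}{(2\pi)^3}\int_{[0,2\pi]^3}\log|1+e^{i\theta_1}+ae^{i\theta_2}+ae^{i\theta_3}|\,d\theta_1 d\theta_2 d\theta_3,$$
I would first apply Jensen's formula in $\theta_1$ --- using $\frac{1}{2\pi}\int\log|e^{i\theta}+A|\,d\theta = \log^+|A|$ with $A = 1 + ae^{i\theta_2} + ae^{i\theta_3}$ --- to get
$$m = \frac{1}{(2\pi)^2}\int\!\!\int \log^+|1+ae^{i\theta_2}+ae^{i\theta_3}|\,d\theta_2 d\theta_3.$$
The sum-and-difference substitution $\alpha = (\theta_2+\theta_3)/2$, $\beta = (\theta_2-\theta_3)/2$ converts $ae^{i\theta_2}+ae^{i\theta_3}$ into $2a\cos\beta\cdot e^{i\alpha}$, and tracking the Jacobian and $2$-to-$1$ cover yields
$$m = \frac{1}{\pi}\int_0^\pi I(2a\cos\beta)\,d\beta, \qquad I(c) := \frac{1}{2\pi}\int_0^{2\pi}\log^+|1+ce^{i\alpha}|\,d\alpha.$$

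For the inner integral, $|1+ce^{i\alpha}|\geq 1$ precisely when $\cos\alpha \geq -|c|/2$, so with $\alpha_0 = \arccos(-|c|/2)$ one has $I(c) = \frac{1}{\pi}\int_0^{\alpha_0}\log|1+ce^{i\alpha}|\,d\alpha$. I would expand $\log|1+ce^{i\alpha}|$ as a Fourier series in $\alpha$ (using the power series of $\log(1+x)$ when $|c|<1$, and the identity $\log|1+ce^{i\alpha}| = \log|c| + \log|1+c^{-1}e^{-i\alpha}|$ when $|c|>1$), and integrate termwise to obtain $I(c)$ as a sum involving $\sin(n\alpha_0)\,c^n/n^2$. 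Substituting $c = 2a\cos\beta$, interchanging sum and $\beta$-integral, and applying the Chebyshev identity $\sin(n\arccos x) = \sqrt{1-x^2}\,U_{n-1}(x)$ reduces the problem to standard integrals of $\cos^n\beta$ against Chebyshev polynomials. The symmetry $\beta \mapsto \pi - \beta$ cancels all even-power contributions in $a$, and careful bookkeeping should give
$$m = \frac{2}{\pi^2}\sum_{k\geq 0}\frac{a^{2k+1}}{(2k+1)^3} = \frac{2}{\pi^2}\bigl(\operatorname{Li}_3(a) - \operatorname{Li}_3(-a)\bigr),$$
proving the first case.

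For $a \geq 1$, I would factor $f = a(a^{-1}+a^{-1}z+w+t)$ to obtain $m(f) = \log a + m(a^{-1}+a^{-1}z+w+t)$. Since the Mahler measure of a linear polynomial is symmetric in its coefficients --- any monomial has modulus $1$ on the torus, so multiplying the polynomial by $z^{-1}$ and relabeling variables swaps the constant term with the coefficient of $z$, and similarly for the other variables --- this equals $\log a + m(1+z+a^{-1}w+a^{-1}t)$, and the already-proven first case applied to $a^{-1}\leq 1$ gives the second formula. The main obstacle lies in the Fourier manipulation: for $a \in (1/2,1]$ the modulus $|c| = 2a|\cos\beta|$ crosses $1$ as $\beta$ varies, so one must split the $\beta$-range and use two different expansions of $\log|1+ce^{i\alpha}|$, and the delicate cancellation producing the clean $(2k+1)^{-3}$ weights must be tracked through both regimes.
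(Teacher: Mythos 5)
First, a point of reference: the paper does not prove this statement at all --- Theorem~\ref{smy} is imported from Smyth's paper \cite{Smyth} as a known result --- so your argument has to stand entirely on its own. Your outer reductions are correct: Jensen's formula in $\theta_1$ does give $m=\frac{1}{(2\pi)^2}\iint\log^+|1+ae^{i\theta_2}+ae^{i\theta_3}|\,d\theta_2\,d\theta_3$, the sum-and-difference substitution is right, and the case $a\ge 1$ does reduce to the case $a\le 1$, since the Mahler measure of a linear form is invariant under permuting the moduli of its coefficients (multiply by $z^{-1}$ and apply the monomial substitution $z\mapsto z^{-1}$, $w\mapsto wz^{-1}$, $t\mapsto tz^{-1}$). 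The genuine gap is the central evaluation. After substituting $c=2a\cos\beta$ you have $\sin(n\alpha_0)=\sqrt{1-a^2\cos^2\beta}\;U_{n-1}(-a\cos\beta)$, so the $n$-th term of your series involves $\int_0^{\pi/2}\cos^n\beta\,\sqrt{1-a^2\cos^2\beta}\,U_{n-1}(-a\cos\beta)\,d\beta$. Because of the square-root factor these are \emph{not} standard integrals of powers of $\cos\beta$ against Chebyshev polynomials; each one is an elliptic/hypergeometric integral in $a$, and extracting the clean weights $a^{2k+1}/(2k+1)^3$ from their sum is precisely the hard part of the theorem, which you assert (``careful bookkeeping should give'') rather than carry out. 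The same applies to the regime $2a|\cos\beta|>1$, where the extra $\alpha_0\log|c|$ contribution and the change of expansion are acknowledged but not controlled. As written, the proof stops exactly where the real work begins. (A minor point: for $c<0$ the condition $|1+ce^{i\alpha}|\ge 1$ is $\cos\alpha\le|c|/2$, not $\ge -|c|/2$; this is harmless since $I(c)=I(|c|)$, but you should say so.)

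The difficulty is largely self-inflicted: applying Jensen to $\theta_1$ first leaves a $\log^+$ that blocks a second clean application. The standard route (essentially Smyth's) groups the polynomial as $(1+z)+a(w+t)$, substitutes $\theta_3=\theta_2+\phi$, and integrates over $\theta_2$ by Jensen with $|A|=2|\cos(\theta_1/2)|$ and $|B|=2a|\cos(\phi/2)|$, giving
\begin{equation*}
m=\frac{4}{\pi^2}\int_0^{\pi/2}\!\!\int_0^{\pi/2}\log\max\bigl(2\cos u,\,2a\cos v\bigr)\,du\,dv
=\frac{4}{\pi^2}\int_0^{\pi/2}\!\!\int_0^{\pi/2}\log^+\!\Bigl(\frac{a\cos v}{\cos u}\Bigr)\,du\,dv,
\end{equation*}
using $\int_0^{\pi/2}\log(2\cos u)\,du=0$. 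This double integral yields the stated trilogarithm combination by a single Fourier expansion of $\log\cos$ and elementary termwise integration, with no square-root factors and no convergence split. I would recommend redoing the computation along these lines rather than trying to close the gaps in the Chebyshev route.
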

No more general formula has been proved so far.
Note that the theorem by Smyth and formula \eqref{deriv} give us the formula
\begin{align*}
& \Li_2(-e^x)-\Li_2(e^x) 
 =  \int_{1-e^x}^{1+e^x}\arccos\left( \frac{1+r^2-e^{2x}}{2r}\right) \frac{d}{dr}\arccos\left( \frac{r^2-1-e^{2x}}{2e^x}\right) dr,
\end{align*}
for $e^x<1$.
Maybe there is a similar kind of relation in the more general expression of \eqref{deriv}?

It seems to be of interest to estimate affine linear polynomials in $n$ variables, both for fixed $n$ or when $n$ tends to infinity.
In \cite{Toledano} the author proves that there exists an analytic function $F$ such that the Mahler measure of the linear form $z_1+\ldots+z_n$ up to an explicit constant is equal to $F(1/n)$. There is also an recursive expression of that analytic function in terms of Laguerre polynomials and Bessel functions. Note that this corresponds to the Ronkin function evaluated at the origin. In the paper \cite{Rod} the authors estimate the growth of the Mahler measure in the linear case when the number of variables goes to infinity and also establish a lower and upper bound in terms of the norm of the coefficient vector. The reason for the interest in these kind of estimates is that it is hard to calculate the Mahler measure numerically and numerical calculations are of interest when looking for relations between the Mahler measure and special values of $L$-functions.
Several such relations has been conjectured by Boyd, see \cite{Boyd2}.
We have not calculated the actual Ronkin function but all the second order derivatives. Note that the Ronkin function of $f=1+z+w+t$ is determined by its second order derivatives up to a polynomial on the form $a+b(x+y+u)$.

\end{document}